%
%

\documentclass[12pt,a4paper,oneside]{amsart}
\usepackage[a4paper]{geometry}
\geometry{hmargin=1.8cm,top=2cm,bottom=2.5cm}

\usepackage{mathptmx} 
\DeclareMathAlphabet{\mathcal}{OMS}{cmsy}{m}{n} 

\usepackage{amssymb,enumerate,mathrsfs}

\usepackage{perpage}

\MakePerPage[2]{footnote}

\newtheorem{theorem}{Theorem}[section]
\newtheorem{proposition}[theorem]{Proposition}
\newtheorem{lemma}[theorem]{Lemma}
\newtheorem{corollary}[theorem]{Corollary}

\theoremstyle{definition}
\newtheorem{remark}[theorem]{Remark}

\numberwithin{equation}{section}

\newcommand{\vertbar}{\>|\>}
\newcommand{\set}[2]{\ensuremath{\{ #1 \vertbar #2 \}}}

\DeclareMathOperator{\ad}{ad}
\DeclareMathOperator{\C}{C}
\DeclareMathOperator{\Cent}{Cent}
\DeclareMathOperator{\dcobound}{d}
\DeclareMathOperator{\E}{E}
\DeclareMathOperator{\Der}{Der}
\DeclareMathOperator{\Hom}{Hom}
\DeclareMathOperator{\Homol}{H}
\DeclareMathOperator{\HC}{HC}
\DeclareMathOperator{\Har}{Har}
\DeclareMathOperator{\Ker}{Ker}
\DeclareMathOperator{\ess}{\mathsf{S}}
\DeclareMathOperator{\sym}{S}
\DeclareMathOperator{\Sl}{\mathsf{sl}}
\DeclareMathOperator{\TR}{TR}
\DeclareMathOperator{\Z}{Z}

\hyphenation{al-geb-ras co-ho-mo-lo-gi-cal co-ho-mo-lo-gy se-mi-sim-ple}

\begin{document}

\title{
Deformations of current Lie algebras. I. Small algebras in characteristic $2$}

\author{Alexander Grishkov}
\address[Alexander Grishkov]{
Institute of Mathematics and Statistics, University of S\~{a}o Paulo,
S\~{a}o Paulo, Brazil and Omsk F.M. Dostoevsky State University, Omsk, Russia
}
\email{shuragri@gmail.com}

\author{Pasha Zusmanovich}
\address[Pasha Zusmanovich]{
Department of Mathematics, University of Ostrava, Ostrava, Czech Republic
}
\email{pasha.zusmanovich@osu.cz}

\date{last minor revision July 7, 2017}
\thanks{J. Algebra \textbf{473} (2017), 513--544; arXiv:1410.3645}

\begin{abstract}
We compute low-degree cohomology of current Lie algebras extended over 
the $3$-di\-men\-si\-o\-nal simple algebra, compute deformations of related 
semisimple Lie algebras, and apply these results to classification of simple Lie
algebras of absolute toral rank $2$ and having a Cartan subalgebra of toral rank
one. Everything is in characteristic $2$.
\end{abstract}

\maketitle

\section*{Introduction}

This is the first in the planned series of papers, devoted to a unified
approach to computation of deformations of current Lie algebras and algebras
close to them, and applications of those deformations in the structure theory. 
This first paper is devoted to some particular characteristic $2$ case.

Throughout the paper, the ground field $K$ is assumed to be of characteristic 
$2$ (sometimes with additional qualifications, such as being perfect or 
algebraically closed), unless stated otherwise. When referring to other results
in modular Lie algebras theory, we customary refer to ``big characteristics'', 
what should mean ``characteristic $p > 2$'', or, depending on the context, 
``characteristic $p \ne 2$'', unless specified otherwise.

The classification of finite-dimensional simple Lie algebras over an 
algebraically closed field of characteristic $p>3$ was completed relatively
recently (see \cite{strade-intro}). While there was a significant progress in 
understanding of an unwieldy zoo of existing examples of simple Lie algebras in
characteristic $2$ and $3$ in a series of recent papers by Dimitry Leites and 
his collaborators (see \cite{bgl} and \cite{blls} and references therein), the 
classification problem in these characteristics remain widely open.

Our starting point is another remarkable paper -- by Serge Skryabin 
\cite{skryabin} -- in which he proved, among other things, that in 
characteristic $2$, there are no simple Lie algebras of absolute toral rank $1$,
and characterized simple Lie algebras having a Cartan subalgebra of toral rank 
$1$ as certain filtered deformations of semisimple Lie algebras having the socle
of the form $S \otimes \mathcal O$, where $S$ belongs to a certain family of 
simple Lie algebras, and $\mathcal O$ is a divided powers algebra.

In this paper we compute these deformations for the case where $S$ is the 
smallest algebra in the family: the $3$-dimensional simple Lie algebra. In doing
so, we follow the standard nowadays approach by Murray Gerstenhaber (for a nice
overview, see, for example, \cite[\S 8]{gs}) in which infinitesimal deformations
are described by the second cohomology of the underlying Lie algebra with 
coefficients in the adjoint module, and obstructions for prolongations of 
infinitesimal deformations live in the third cohomology. Even this very 
particular, at the first glance, case is of considerable interest: it allows to
complete the ongoing classification of simple Lie algebras of absolute toral 
rank $2$, due to the first author and Alexander Premet. (In \cite{skryabin2}
it is cautiously conjectured that any restricted simple Lie algebra of absolute
toral rank $2$ is isomorphic either to $A_2$, or to $G_2$.)

There are similarities between this paper and \cite{me-deformations}: in both 
cases, driven by structure theory of modular Lie algebras (classification of
simple Lie algebras with a solvable maximal subalgebra in big characteristics in
\cite{me-deformations}, and classification of certain simple Lie algebras in 
characteristic $2$ in the present paper), we compute some filtered deformations
of semisimple Lie algebras, naturally appearing in the corresponding 
classification problems (the present paper is related to \cite{skryabin} in more
or less the same way as \cite{me-deformations} is related to \cite{weisfeiler}).
In fact, we do a bit more: we compute low-degree cohomology and deformations of the corresponding current Lie algebra 
$S \otimes A$, as well as its extensions by derivations, where instead of 
divided powers algebra we take an arbitrary associative commutative algebra $A$.
These intermediate computations lead to interesting formulae intertwining 
various cohomological invariants of $S$ and $A$, complement investigations in 
\cite{me-deformations}, \cite{low} and \cite{without-unit}, and are of 
independent interest.

In the subsequent papers we plan to extend these computations to all the cases 
appearing in Skryabin's theorem, i.e., where $S$ is either Zassenhaus or 
Hamiltonian algebra\footnote{
In characteristic $2$, unlike in big characteristics, there are two types of
simple Hamiltonian algebras, alternate and nonalternate, depending on the type 
of the differential $2$-form the algebra preserves; see \cite[\S 1.1.2]{blls} 
and references therein. The Hamiltonian algebras we are dealing with are of 
alternate type (denoted by $\mathfrak h_{\Pi}(n, \underline{N})$ in 
\cite{blls}).}, 
and to obtain in this way a full description of simple Lie algebras having a 
Cartan subalgebra of toral rank $1$, what should be an important intermediate 
step in classification efforts. However, the more or less direct computational 
approach of this paper will meet considerable difficulties if we will try to 
extend it to that generality. For example, as it will be clear from the 
discussion below, the second cohomology $\Homol^2(S,S)$ is involved, and the 
latter cohomology in the case of general Zassenhaus algebra in characteristic 
$2$ seems to be enormous (some relevant computations were made in 1980s by Askar
Dzhumadil'daev, but no full account is available in the literature; and the case
of Hamiltonian algebras appears to be even more cumbersome). Thus, different, 
more subtle, approaches will be needed.

The contents of the paper are as follows. In the preliminary \S \ref{sec-prelim}
we fix notation and recall the necessary notions (current algebras, various
cohomology theories, etc.). In \S\S \ref{sec-h2-zass} and \ref{sec-h2-adj} we 
obtain results about the second cohomology with the coefficients in the trivial 
and adjoint module, respectively, of the current Lie algebra extended over the 
$3$-dimensional simple Lie algebra. In \S \ref{sec-plus-d} we glue these results
together to compute the positive part of the second cohomology, and filtered 
deformations of the corresponding semisimple Lie algebra. In \S \ref{sec-fin} we
apply the preceding results to derive the main result of the paper: there are no 
``new'' simple Lie algebras of absolute toral rank $2$ and with Cartan 
subalgebra of toral rank $1$. This result will be used in the forthcoming 
classification of simple Lie algebras of absolute toral rank $2$. The last
\S \ref{sec-15} contains a brief discussion of a family of $15$-dimensional
simple Lie algebras appearing during the proof.

\section{Preliminaries}\label{sec-prelim}

\subsection{Zassenhaus algebras}

Recall a construction of the ubiquitous Zassenhaus algebras (see, for example, 
\cite[Vol. I, \S 7.6]{strade-intro}).

Let $A$ be an associative commutative algebra with unit, and $D$ a derivation of
$A$. Then the set of derivations $AD = \set{aD}{a \in D}$ is a Lie algebra of 
derivations of $A$. Assuming that this Lie algebra is also a free 
(one-dimensional) $A$-module, and identifying $aD$ with $a$, $AD$ can be 
considered as a Lie algebra structure on $A$ with the bracket 
$[a,b] = aD(b) - bD(a)$ for any $a,b\in A$. In characteristic $2$, the latter 
formula is equivalent to 
\begin{equation}\label{eq-brack}
[a,b] = D(ab) .
\end{equation}

(Note that algebras with multiplication (\ref{eq-brack}) were considered also
in big characteristics, but, of course, then they are no longer Lie algebras.
They belong to the class of so-called Novikov--Jordan algebras -- commutative 
algebras satisfying a certain identity of degree $4$, see 
\cite{dzhu-novikov-jord}. In characteristic $2$ the classes of Lie algebras
and Novikov--Jordan algebras have nontrivial intersection -- for example, all
algebras with multiplication (\ref{eq-brack}) -- but these classes do not 
coincide).

Recall that the divided powers algebra $\mathcal O_1(n)$ over a field of 
characteristic $p$ is defined as a $p^n$-dimensional algebra having a basis 
$\{x^{(i)}\}$, $0 \le i < p^n$, with multiplication 
$x^{(i)} x^{(j)} = \binom{i+j}j x^{(i+j)}$. The special derivation $\partial$ of
$\mathcal O_1(n)$ is defined as
$$
\partial(x^{(i)}) = \begin{cases} 
x^{(i-1)} & \text{ if } i>0 \\
0         & \text{ if } i=0.
\end{cases}
$$

Specializing the bracket (\ref{eq-brack}) to the case $A = \mathcal O_1(n)$,
and $D = \partial$, we arrive at the Lie algebra $W_1(n)$ of dimension $2^n$
with the basis $\{e_i = x^{(i+1)} \partial | -1 \le i \le 2^n-2 \}$ and 
multiplication
$$
[e_i,e_j] = \begin{cases}
\binom{i+j+2}{i+1} e_{i+j} &\text{if } -1 \le i+j \le 2^n-2 \\
0                          &\text{otherwise}.
\end{cases}
$$
(The reader may be puzzled for a second by the unusual coefficient 
$\binom{i+j+2}{i+1}$ instead of the usual one $\binom{i+j+1}j - \binom{i+j+1}i$,
but it is obvious that in characteristic $2$ the latter is equal to the former; 
we adopt the usual convention that $\binom{i}{j} = 0$ if $i<j$.)

The algebra $W_1(n)$ is a subalgebra of the whole derivation algebra 
$\Der(\mathcal O_1(n))$, the latter is freely generated as an
$\mathcal O_1(n)$-module by $\partial, \partial^p, \dots, \partial^{p^{n-1}}$.

In characteristic $2$, unlike in big characteristics, the algebra $W_1(n)$ is 
not simple, but its commutant $W_1^\prime(n)$ of dimension $2^n-1$, linearly 
spanned by elements $\{e_i | -1 \le i \le 2^n-3 \}$, is. By abuse of terminology, we will refer to 
both algebras $W_1(n)$ and $W_1^\prime(n)$ as \emph{Zassenhaus algebras}. 
(In \cite{skryabin} the latter algebra is denoted by $K_1^\prime(n)$.)

In the first nontrivial case $n=2$, the algebra $W_1^\prime(2)$ is 
$3$-dimensional. This is the case we will mainly deal with, so we adopt a special
notation for this algebra, $\ess$, and its basic elements: 
$e = e_{-1}$, $h = e_0$, and $f = e_1$. The algebra $\ess$ has multiplication
table 
\begin{equation}\label{eq-mult}
[e,h] = e, \quad [f,h] = f, \quad [e,f] = h ,
\end{equation}
and is an analog of $\Sl(2)$ in big characteristics. Further, denoting $g=e_2$,
we get a basis $\{e,h,f,g\}$ of $W_1(2)$ where, in addition to (\ref{eq-mult}),
$$
[e,g] = f, \quad [h,g] = 0, \quad [f,g] = 0.
$$

\subsection{Current and semisimple algebras}

Given a Lie algebra $L$ and associative commutative algebra $A$, the Lie algebra
$L \otimes A$ with the bracket
$$
[x \otimes a, y \otimes b] = [x,y] \otimes ab
$$ 
for any $x,y \in L$, $a,b \in A$, is referred to as \emph{current Lie algebra}.
If $A$ contains the unit $1$, then there is an obvious embedding of Lie algebras
$L \hookrightarrow L \otimes A$ induced by the map $x \mapsto x \otimes 1$, 
$x\in L$.

We can extend current Lie algebras by derivations. For example, if $\mathfrak D$
is a Lie algebra of outer derivations of $S$, $\mathfrak E$ is a Lie algebra of
derivations of $A$, and $U$ is an $\mathfrak E$-invariant subspace of $A$, we 
can consider an extension of the current algebra $S \otimes A$ of the form 
\begin{equation}\label{eq-semisimple}
S \otimes A + \mathfrak D \otimes U + \mathfrak E ,
\end{equation}
(here, and in the subsequent similar constructions, ``+'' refers to the 
semidirect sum, while the sign $\oplus$ is reserved for the direct sum of vector spaces), where $\mathfrak D \otimes U$ and $\mathfrak E$ act on 
$S \otimes A$ via the first and the second tensor factor, respectively:
\begin{align*}
[x \otimes a, D \otimes u] &= D(x) \otimes au \\
[x \otimes a, E] &= x \otimes E(a)
\end{align*}
for any $x \in S$, $a \in A$, $u\in U$, $D \in \mathfrak D$, 
$E \in \mathfrak E$, and the Lie bracket between $\mathfrak D \otimes U$ and $\mathfrak E$ is defined
by taking their commutator as maps acting on $S \otimes A$:
$$
[D \otimes u, E] = D \otimes E(u) .
$$
(Of course, more complex extensions are possible, where derivations are 
``mixed'', i.e., involve non-splittable sums of terms from $\Der(S) \otimes B$
and $\Der(A)$, but the ``homogeneous'' case above will be enough for our 
purposes here.)

The significance of such constructions in the structure theory of modular Lie 
algebras stems from the fact that, according to the classical Block theorem,
every finite-dimensional semisimple Lie algebra over a field of positive 
characteristic is sandwiched between the direct sum of current Lie algebras
of the form $S \otimes \mathcal O$, where $S$ is a simple Lie algebra, and 
$\mathcal O$ is a divided powers algebra, and its Lie algebra of derivations, 
i.e., the direct sum of algebras of the form 
$\Der(S) \otimes \mathcal O + \Der(\mathcal O)$ (see 
\cite[Vol. I, Corollary 3.3.5]{strade-intro}). In particular, it happens often in the structure theory that description of some classes of Lie algebras 
reduces to elucidation of the structure of some classes of deformations of semisimple Lie algebras of the kind (\ref{eq-semisimple}),
or similar algebras. The present paper is an instance of such elucidation.

\subsection{Gradings}\label{subseq-grad}

The basic elements of the Zassenhaus algebra provide the gradings
$$
W_1^\prime(n) = \bigoplus_{i=-1}^{2^n-3} Ke_i \quad\text{and}\quad
W_1(n) = \bigoplus_{i=-1}^{2^n-2} Ke_i ,
$$ 
which will be referred to as \emph{standard gradings}.

Any grading $S = \bigoplus_i S_i$ on an algebra $S$ induces a grading on the 
current algebra $S \otimes A = \bigoplus_i (S_i \otimes A)$. In the cases of 
$S = \ess$ and $S = W_1(2)$, this induced grading on the respective current 
algebra will be also referred to as \emph{standard}.

The extended algebras of the form (\ref{eq-semisimple}) acquire the grading
induced from the standard grading on $S \otimes A$, assigning the respective
weights to elements of $\mathfrak D \otimes U$ according to weights of the
external derivations in $\mathfrak D$, and putting $\mathfrak E$ to the zero
component.

Recall that for $\mathbb Z$-gradings, \emph{depth} is the minimal index of a 
negative nonzero component, and \emph{length} is the maximal index of a positive
nonzero component. For example, the standard gradings on $\ess$ and $W_1(2)$
have length $1$ and $2$, respectively, and both of them have depth $1$.

\subsection{Lie algebra cohomology}
Given a Lie algebra $L$ and an $L$-module $M$, the corresponding $n$th 
cohomology will be denoted by $\Homol^n(L,M)$ (we will be concerned exclusively
with the cases $n=1,2,3$, and the trivial module $K$ or the adjoint module $L$).
In evaluating cohomology, we will use repeatedly the following fact. If 
$L = \bigoplus_{\alpha \in G} L_\alpha$ is a grading of a Lie algebra $L$ by an
additively written abelian group $G$, and 
$M = \bigoplus_{\alpha \in G} M_\alpha$ is a $G$-graded $L$-module (in the case
of the trivial module, all the grading is concentrated in degree $0$), then the
Chevalley--Eilenberg complex computing $\Homol^n(L,M)$ decomposes into the 
direct sum of complexes:
\begin{equation}\label{eq-shift}
\Homol^n(L,M) \simeq \bigoplus_{\lambda \in G} \Homol^n_\lambda(L,M) ,
\end{equation}
where $\Homol^n_\lambda(L,M)$ is linearly spanned by the classes of cocycles 
$\varphi$ satisfying the condition 
$$
\varphi(L_{\alpha_1}, \dots, L_{\alpha_n}) \subseteq 
M_{\alpha_1 + \dots + \alpha_n - \lambda}
$$ 
for any $\alpha_1, \dots, \alpha_n, \lambda \in G$. We will call $n$ and
$\lambda$ the \emph{cohomology degree} and \emph{weight}, respectively.

In the case $G = \mathbb Z$, the \emph{positive cohomology} is defined as
$$
\Homol^n_+(L,M) = \bigoplus_{\lambda > 0} \Homol^n_\lambda(L,M) .
$$

When speaking about positive cohomology of algebras $\ess \otimes A$ and their
extended algebras of the form (\ref{eq-semisimple}), we always mean the standard
grading.

When specifying cocycles (and, more generally, cochains) on algebras by giving 
the cocycle values on all appropriate combinations of basic elements, we will omit 
the zero values. For example, if we say (as, for example, in case (\ref{en-1}) of Proposition 
\ref{prop-h2-triv} below) that a cocycle 
$\Phi: (\ess \otimes A) \wedge (\ess \otimes A) \to K$ is given by 
\begin{equation*}
(f \otimes a) \wedge (f \otimes b) \mapsto \xi(ab) ,
\end{equation*} 
that implicitly assumes that $\Phi$ vanishes on
$(e \otimes A) \wedge (e \otimes A)$,  
$(e \otimes A) \wedge (f \otimes A)$,  
$(e \otimes A) \wedge (h \otimes A)$,  
$(h \otimes A) \wedge (h \otimes A)$, and
$(h \otimes A) \wedge (f \otimes A)$.

\subsection{Deformations}
The significance of the positive cohomology stems from the fact that it is 
responsible for description of filtered deformations of the graded algebra $L$, 
i.e., filtered Lie algebras such that their associated graded algebra is 
isomorphic to $L$. Infinitesimal deformations lie in $\Homol^2_+(L,L)$, and 
obstructions to their prolongability are described by the Massey brackets, 
defined as cohomology classes in 
$\Homol^3_+(L,L)$ of a cocycle $[[\varphi,\psi]] + [[\psi,\varphi]]$, where
\begin{equation*}
[[\varphi,\psi]](x,y,z) = 
\varphi(\psi(x,y),z) + \varphi(\psi(z,x),y) + \varphi(\psi(y,z),x)
\end{equation*}
for any $x,y,z \in L$. (For details, see \cite{gs} or \cite[\S 2.1]{bgl}. The 
latter reference addresses the peculiarities of characteristic $2$. Note that, 
due to these peculiarities, the definition of Massey brackets adopted here deviates from the
standard one: it accounts for only a ``half'' of the usual terms and is not 
symmetric.)

As in the case of positive cohomology, when speaking about filtered deformations
of algebras, we always mean the standard grading.

\subsection{Cyclic cohomology}
One of the beauties of cohomology of current Lie algebras $L \otimes A$ is that
it intertwines various cohomology theories of the underlying algebras $L$ and 
$A$. We will need two such theories -- or, rather, merely their low-degree 
incarnations -- associated with the algebra $A$.

Recall that a map $f$ of two variables is \emph{symmetric} if it satisfies the
condition $f(a,b) = f(b,a)$ for any its two arguments $a,b$, and is 
\emph{alternating} if $f(a,a) = 0$. Obviously, in the class of bilinear maps, 
alternating maps are symmetric, but not vice versa.

Consider bilinear maps $\alpha: A \times A \to K$ satisfying
the cocycle equation
\begin{equation}\label{eq-coc-a}
\alpha(ab,c) + \alpha(ca,b) + \alpha(bc,a) = 0
\end{equation}
for any $a,b,c\in A$. Such \emph{symmetric} maps form the first cyclic cohomology $\HC^1(A)$ (see, for
example, \cite[Proposition 2.1.14]{loday} for equivalence of this definition and
the usual definition in terms of the double complex; note also that normally we
would say here ``skew-symmetric'', but in characteristic $2$ skew-symmetry and
symmetry are the same). Note an obvious but useful fact: if $A$ contains a unit
$1$, then $\alpha(1,A) = 0$ for any $\alpha \in \HC^1(A)$. 

We will consider a variation of the first cyclic cohomology: the space of 
\emph{alternating} maps satisfying the same cocycle equation (\ref{eq-coc-a}) 
will be denoted by $\widehat{\HC^1}(A)$. We have an obvious inclusion 
$\widehat{\HC^1}(A) \subseteq \HC^1(A)$.

\subsection{Harrison cohomology}
$\Har^n(A,A)$ denotes the $n$th degree Harrison cohomology of $A$. See 
\cite{harrison} where this cohomology is introduced, and \cite{gs} for a more 
modern treatment, but all what we will need is an interpretation of Harrison 
cohomology in low degrees: Harrison $1$-coboundaries vanish, Harrison 
$1$-cocycles are the same as Hochschild $1$-cocycles, i.e., derivations, 
so $\Har^1(A,A)$ coincides with $\Der(A)$, the space (actually, a Lie algebra)
of derivations of $A$; Harrison $2$-cochains form the space of symmetric bilinear
maps $A \times A \to A$, denoted by $\sym^2(A,A)$;
Harrison $2$-coboundaries are the same as Hochschild $2$-coboundaries, and 
Harrison $2$-cocycles are symmetric Hochschild $2$-cocycles. 
An easy but useful observation is that every Harrison $2$-cochain $\alpha$ is
cohomologous to one with $\alpha(1,A) = 0$ (\cite[p.~194]{harrison}).

Another elementary observation which, nevertheless, will play role in the 
sequel: any Harrison $1$-cocycle, i.e., derivation, of an algebra $A$, vanishes
on $A^{[2]}$, where $A^{[2]}$ is the subalgebra of $A$ linearly spanned by 
squares of all elements (obviously, already the \emph{set} of all squares forms
a subring in $A$, and is a subalgebra if the ground field is perfect).

\subsection{Toral ranks, Skryabin's theorem}
Recall that a (relative) \emph{toral rank} of a subalgebra $S$ of a Lie algebra 
$L$ is the maximal dimension of tori in the $p$-subalgebra generated by
$S$ in the $p$-envelope of $L$. An \emph{absolute toral rank} $\TR(L)$ 
of a Lie algebra $L$ is the toral rank of $L$ as a subalgebra of itself (in 
other words, the maximal dimension of tori in the $p$-envelope of $L$). In the 
classification scheme of simple Lie algebras, both in small and big 
characteristics, a significant role is played by intermediate classifications of
simple Lie algebras having Cartan subalgebras of ``small'' toral rank.

We quote the relevant part of \cite[Theorem 6.3]{skryabin}:
\begin{theorem}[Skryabin]\label{th-skr}
A simple finite-dimensional Lie algebra over an algebraically closed field, 
having a Cartan subalgebra of toral rank $1$, is isomorphic either to the 
Zassenhaus algebra $W_1^\prime(n)$ ($n > 1$), or to the Hamiltonian algebra in 
two variables $H_2^{\prime\prime}(n,m)$ ($n,m > 1$), or to a filtered
deformation of the graded semisimple Lie algebra $L$ such that
\begin{equation}\label{eq-form}
S \otimes \mathcal O_1(n) \subset L \subseteq 
\Der(S) \otimes \mathcal O_1(n) + K\partial ,
\end{equation}
where either $n=2$ and $S \simeq W_1^\prime(n)$, or $n=1$ and 
$S \simeq H_2^{\prime\prime}(n,m)$. The grading on $L$ is of depth $1$ and the 
induced grading on the socle $S \otimes \mathcal O_1(n)$ is standard.
\end{theorem}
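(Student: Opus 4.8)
The plan is to run the classical machinery for classifying modular simple Lie algebras of small toral rank: reduce the problem to an associated graded Lie algebra by means of a Weisfeiler-type filtration, recognise that graded algebra, and then recover the original algebra as a filtered deformation of it. So, let $L$ be simple of the kind in the statement and let $H$ be a Cartan subalgebra of toral rank $1$; by definition the $p$-subalgebra generated by $H$ inside the $p$-envelope of $L$ contains a one-dimensional maximal torus $T = Kt$ with $t^{[2]} = t$. Since $L$ is a restricted module over its $p$-envelope, $\ad t$ is semisimple with eigenvalues in $\mathbb F_2$, so at the crudest level $L = \C_L(t) \oplus \set{x}{[t,x]=x}$ is merely $\mathbb Z/2\mathbb Z$-graded. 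This coarse grading carries almost no information by itself, but it is the foothold for everything that follows.

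Next I would fix a maximal subalgebra $L_{(0)} \supseteq H$ --- it exists and, $L$ being simple, is not an ideal --- and build the associated standard (Weisfeiler) filtration $L = L_{(-1)} \supseteq L_{(0)} \supseteq L_{(1)} \supseteq \dots$, taking $L_{(-1)}/L_{(0)}$ to be a minimal $L_{(0)}$-submodule of $L/L_{(0)}$. Passing to $G = \operatorname{gr} L = \bigoplus_i G_i$ one gets a graded Lie algebra which is \emph{transitive} (for $i \ge 0$, an element $x \in G_i$ with $[x,G_{-1}] = 0$ is zero), whose zero component $G_0$ acts faithfully and irreducibly on $G_{-1}$, which inherits from the image of $H$ a Cartan subalgebra of toral rank at most $1$, and which satisfies $\dim G_0 < \dim L$. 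Analysing the filtration one also shows that the relevant grading may be taken of depth $1$, as in the conclusion.

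The heart of the matter is the recognition of $G$, and it is here that I expect the real work to lie. One combines three ingredients: Skryabin's theorem that in characteristic $2$ there are no simple Lie algebras of absolute toral rank $1$; Block's theorem, which sandwiches any finite-dimensional semisimple Lie algebra between $\bigoplus_j S_j \otimes \mathcal O_j$ and $\bigoplus_j (\Der(S_j) \otimes \mathcal O_j + \Der(\mathcal O_j))$; and induction on dimension, applying the statement being proved (or its already established small-dimensional instances) to $G_0$ and to the simple subquotients that arise. Toral-rank bookkeeping then forces $G_0$ to be very small --- its only possible non-solvable simple subquotients being Zassenhaus or Hamiltonian of small rank --- and the faithful irreducible action of $G_0$ on $G_{-1}$ together with transitivity pins $G$ down. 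Two outcomes occur. Either $G$ is already simple, in which case a simple graded Lie algebra of depth $1$ possessing a Cartan subalgebra of toral rank $1$ must be a Zassenhaus algebra $W_1^\prime(n)$ ($n > 1$) or a two-variable Hamiltonian algebra $H_2^{\prime\prime}(n,m)$ ($n,m > 1$) with its standard grading; or $G$ is properly semisimple, and Block's theorem applied to its socle yields a single factor $S \otimes \mathcal O$, the toral-rank constraint forcing $\mathcal O = \mathcal O_1(n)$ and the extra one-dimensional slack being $K\partial$, with $S$ the next smaller algebra in the list --- either $S \simeq W_1^\prime(2) = \ess$ and $n=2$, or $S \simeq H_2^{\prime\prime}(1,m)$ and $n=1$ --- so that $S \otimes \mathcal O_1(n) \subset G \subseteq \Der(S) \otimes \mathcal O_1(n) + K\partial$ with the induced grading on the socle standard.

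Finally, since $G = \operatorname{gr} L$, the algebra $L$ is by construction a filtered deformation of $G$; in the semisimple case this is exactly the third alternative of the statement, while in the simple case a short additional argument (that filtered deformations of $W_1^\prime(n)$ and $H_2^{\prime\prime}(n,m)$ in the standard grading do not leave the list) identifies $L$ itself. As indicated, the principal obstacle is the recognition of $G$: in characteristic $2$ the usual recognition theorems for transitive graded Lie algebras (Weisfeiler, Kac, Benkart--Gregory--Premet) are unavailable in the form one would like, so one must substitute a direct and delicate analysis of low-depth transitive graded Lie algebras whose zero component has toral rank at most $1$ --- controlling the possible $G_0$-module structures on $G_{-1}$, the extent to which $G$ fails to be generated by $G_{-1} \oplus G_0$ (the Zassenhaus case is precisely where this failure occurs), and the precise shape of the socle. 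A secondary but still substantial difficulty is the toral-rank arithmetic needed to exclude every simple $S$ and every divided powers algebra $\mathcal O$ other than those in the two named families.
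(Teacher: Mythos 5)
This statement is not proved in the paper at all: it is quoted verbatim from Skryabin's article (the relevant part of Theorem~6.3 of \cite{skryabin}), with only a one-sentence remark that the standardness of the induced grading on the socle follows from Theorems~1.1 and~1.2 of that paper. So there is no internal proof to compare yours against; what you have written is an attempt to reprove a deep external classification result.

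As such, your proposal is a strategy outline rather than a proof, and the gap is exactly where you yourself locate it. The frame --- choose a maximal subalgebra containing the Cartan subalgebra, form the Weisfeiler filtration, recognise the transitive graded algebra $G = \operatorname{gr} L$, and recover $L$ as a filtered deformation --- is the correct general shape, but every conclusion you draw from it (``toral-rank bookkeeping then forces $G_0$ to be very small'', ``the faithful irreducible action \dots pins $G$ down'', ``a simple graded Lie algebra of depth $1$ \dots must be a Zassenhaus algebra or a two-variable Hamiltonian algebra'') is asserted, not argued, and none of these is available off the shelf in characteristic $2$: the recognition theorems of Weisfeiler, Kac, and Benkart--Gregory--Premet all require $p>3$ or at least $p>2$. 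Carrying out the recognition is the entire content of Skryabin's roughly fifty-page paper, and his actual route is not a routine induction on $\dim G_0$ plus Block's theorem; it proceeds through a detailed analysis of rank-one tori in the $p$-envelope, the structure of the null component $\C_L(t)$, sandwich elements, and a case-by-case elimination, and it is also where the absence of simple algebras of absolute toral rank $1$ (itself a hard theorem) enters. Likewise the final step --- that filtered deformations of $W_1^\prime(n)$ and $H_2^{\prime\prime}(n,m)$ with respect to the standard grading stay in the list --- is a separate nontrivial cohomological statement you have not addressed. In short, the skeleton is right but the theorem's substance is entirely in the steps you defer; for the purposes of this paper the correct move is simply to cite \cite{skryabin}, as the authors do.
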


Note that the condition of the grading to be standard, implicit in the 
formulation of the theorem in \cite{skryabin}, follows from Theorems 1.1 and 1.2
of that paper.

Concrete realization of Hamiltonian algebras is immaterial for our purpose here.
All what we need to know is the absolute toral rank of the algebras involved:
$\TR(W_1^\prime(n)) = n$ and $\TR(H_2^{\prime\prime}(n,m)) = n+m-1$.

We are going to describe filtered deformations appearing in Theorem \ref{th-skr}
in the simplest case $S = \ess$.

\section{
Second cohomology of $\ess \otimes A$ with trivial coefficients
}\label{sec-h2-zass}

As explained in the introduction, we are interested in low-degree cohomology
of $\ess \otimes A$. Our first goal is to compute $\Homol^2(\ess \otimes A, K)$.
(In what follows, $A$ denotes an arbitrary associative commutative algebra with
unit.)

There are general formulas for $\Homol^2(L \otimes A,K)$ for an arbitrary Lie 
algebra $L$ (see \cite[Theorem 1]{without-unit}), but they are valid in 
characteristic $\ne 2,3$. To extend these results to the case of 
characteristic $2$, new notions and techniques will be needed. We defer their 
development to a subsequent paper, and treat here the case $L = \ess$ via
straightforward computations.

\begin{proposition}\label{prop-h2-triv}
$$
\Homol^2(\ess \otimes A,K) \simeq 
(A/A^{[2]})^* \oplus (A/A^{[2]})^* \oplus \widehat{\HC^1}(A) .
$$
The basic cocycles can be chosen as follows:
\begin{enumerate}[\upshape(i)]
\item\label{en-1}
$(f \otimes a) \wedge (f \otimes b) \mapsto \xi(ab)$;
\item\label{en-2}
$(e \otimes a) \wedge (e \otimes b) \mapsto \xi(ab)$,
where, in both (\ref{en-1}) and (\ref{en-2}), 
$\xi: A \to K$ is a linear map such that $\xi(A^{[2]}) = 0$;
\item\label{en-3}
$(e \otimes a) \wedge (f \otimes b) \mapsto \alpha(a,b)$ \\
$(h \otimes a) \wedge (h \otimes b) \mapsto \alpha(a,b)$,
where $\alpha \in \widehat{\HC^1}(A)$.
\end{enumerate}
\end{proposition}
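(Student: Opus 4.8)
The plan is to compute $\Homol^2(\ess \otimes A, K)$ by direct inspection of the Chevalley--Eilenberg complex, exploiting the standard $\mathbb Z$-grading of $\ess$ (with $\deg e = -1$, $\deg h = 0$, $\deg f = 1$) together with the splitting (\ref{eq-shift}). Since the trivial module $K$ sits in degree $0$, a cocycle of weight $\lambda$ pairs graded components $(\ess \otimes A)_i \wedge (\ess \otimes A)_j$ nontrivially only when $i+j = \lambda$; the possible values of $i+j$ range over $\{-2,-1,0,1,2\}$, so there are only finitely many weights to consider. A further simplification: a $2$-cochain $\Phi$ on $\ess \otimes A$ decomposes into blocks $\Phi_{xy}\colon A \wedge A \to K$ or $A \otimes A \to K$ indexed by pairs of basic elements $x,y \in \{e,h,f\}$ (six blocks in all), and the cocycle condition becomes a system of linear relations among these blocks with coefficients read off from the multiplication table (\ref{eq-mult}). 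First I would write out this system explicitly.

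The main computational step is to solve that system. Applying the cocycle identity $\Phi([u,v],w)+\Phi([w,u],v)+\Phi([v,w],u)=0$ to triples of basic elements of $\ess$ tensored with arbitrary $a,b,c \in A$, one extracts, block by block: relations forcing $\Phi_{he}, \Phi_{hf}$ to be expressible via $\Phi_{ee},\Phi_{ff},\Phi_{ef},\Phi_{hh}$ (or to vanish), relations showing $\Phi_{ee}$ and $\Phi_{ff}$ are symmetric and that their induced maps factor through $A/A^{[2]}$ (this is where the characteristic-$2$ phenomenon, $[a,a]$-type degeneracies, enters — squaring kills the relevant terms, so only a functional on $A/A^{[2]}$ survives), and relations identifying $\Phi_{ef}$ with $\Phi_{hh}$ and forcing this common map $\alpha$ to be alternating and to satisfy the cyclic cocycle equation (\ref{eq-coc-a}), i.e. $\alpha \in \widehat{\HC^1}(A)$. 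Then I would compute the coboundaries: a $1$-cochain on $\ess \otimes A$ is a triple of linear maps $A \to K$, and its differential hits precisely the ``redundant'' part of the cocycle space, leaving the three summands $(A/A^{[2]})^* \oplus (A/A^{[2]})^* \oplus \widehat{\HC^1}(A)$. Finally one checks that the representative cocycles (\ref{en-1}), (\ref{en-2}), (\ref{en-3}) indeed satisfy all the relations and are linearly independent modulo coboundaries, giving a complete set of basic classes.

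I expect the main obstacle to be bookkeeping rather than conceptual: keeping track of which of the six blocks each instance of the cocycle equation constrains, and making sure no relation is double-counted or missed, since in characteristic $2$ many sign-based cancellations that one relies on in the classical $\Sl(2)$ computation behave differently (in particular symmetric and alternating are no longer interchangeable, so one must be careful to track which maps are merely symmetric and which are genuinely alternating). A secondary subtlety is verifying that the passage to $A/A^{[2]}$ is exactly right — that the relevant relations force vanishing on $A^{[2]}$ but impose nothing more — which uses the elementary observation recalled in the preliminaries that derivations, and more generally the maps arising here, are controlled by their behaviour modulo squares. Once the block system is correctly assembled, the remaining work is routine linear algebra.
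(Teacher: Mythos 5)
Your proposal is correct and follows essentially the same route as the paper: decompose by the weight of the standard grading, solve the cocycle equations block by block on pairs of basic elements of $\ess$, and then quotient by coboundaries, arriving at the three summands $(A/A^{[2]})^*\oplus(A/A^{[2]})^*\oplus\widehat{\HC^1}(A)$ in weights $2$, $-2$, $0$ respectively. The only divergence is that the paper disposes of the odd weights $\pm 1$ in one stroke by invariance of cohomology under the torus $K(h\otimes 1)$ (the root decomposition $\ess\otimes A=(h\otimes A)\oplus(\langle e,f\rangle\otimes A)$ forces those components to vanish), whereas you would check by hand that every weight-$(\pm 1)$ cocycle is of the form $(e\otimes a)\wedge(h\otimes b)\mapsto\mu(ab)$ and hence exactly a coboundary --- both work, and the remaining weights $-2,0,2$ are treated exactly as in your outline.
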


\begin{proof}
Consider the standard $\mathbb Z$-grading of $\ess \otimes A$, and the induced 
decomposition (\ref{eq-shift}) of its cohomology:
\begin{equation*}
\Homol^2(\ess \otimes A, K) = 
\bigoplus_{k\in \mathbb Z} \Homol^2_k(\ess \otimes A, K) .
\end{equation*}
Obviously, the nonzero terms here are possible only for $-2 \le k \le 2$.

Further, the root space decomposition of $\ess \otimes A$ with respect to the
toral element $h \otimes 1$ has the form 
\begin{equation}\label{eq-root-sp}
\ess \otimes A = L_{\overline 0} \oplus L_{\overline 1} ,
\end{equation}
where $L_{\overline 0} = L_0 = h \otimes A$, and 
$L_{\overline 1} = L_{-1} \oplus L_1 = \langle e, f \rangle \otimes A$. Hence, 
by the theorem of invariance of cohomology with respect to the torus action
(see, for example, \cite[Chapter 1, \S 5.2]{fuchs}), we may restrict our 
attention to the subcomplex generated by cocycles respecting this decomposition.
Being coupled with decomposition (\ref{eq-root-sp}), this shows that it will be
enough to consider cocycles of weight $-2$, $0$ and $2$.

The proof of Proposition \ref{prop-h2-triv} now follows from the two Lemmas 
below, and noting that the case of cohomology of weight $2$ is, by symmetry
interchanging $e$ and $f$, completely similar to those of weight $-2$ described
in Lemma \ref{lem-0}.
\end{proof}

\begin{lemma}\label{lem-0}
$\Homol^2_{-2}(\ess \otimes A, K) \simeq (A/A^{[2]})^*$. The basic cocycles can
be chosen as in part (\ref{en-1}) of Proposition \ref{prop-h2-triv}.
\end{lemma}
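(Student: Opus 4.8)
The aim is to compute $\Homol^2_{-2}(\ess \otimes A, K)$, so I would start by writing down explicitly what a $2$-cochain of weight $-2$ respecting the root space decomposition (\ref{eq-root-sp}) looks like. By the weight constraint $\varphi(L_{\alpha_1}, L_{\alpha_2}) \subseteq K$ must vanish unless $\alpha_1 + \alpha_2 = -2$ in the standard $\mathbb Z$-grading; since the gradation degrees available are $-1, 0, 1$, the only surviving component is $\varphi$ restricted to $(f \otimes A) \wedge (f \otimes A)$ (recall $f = e_1$ has degree $1$, wait --- here one must be careful about the sign convention: $f$ is the lowest negative root vector). Let me instead phrase it invariantly: the only pair of homogeneous components whose degrees sum to $-2$ is the one involving the ``lowest'' root vector twice, which the proposition calls $f$. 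So a weight-$(-2)$ cochain is determined by a single bilinear form $\beta : A \times A \to K$ via $(f \otimes a) \wedge (f \otimes b) \mapsto \beta(a,b)$, and since the wedge is alternating, $\beta$ is alternating.

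\textbf{Cocycle condition.} Next I would impose $d\varphi = 0$. The Chevalley--Eilenberg differential applied to three arguments taken from $\ess \otimes A$ gives, after using that $\varphi$ is supported only on $(f\otimes A)\wedge(f\otimes A)$: the only triples that can contribute are those where the three bracket terms land on pairs of $f$'s. Computing $[f \otimes a, f \otimes b] = [f,f] \otimes ab = 0$ in $\ess$, one sees the ``bracket-into-$\varphi$'' terms with two $f$-arguments die, so the constraint comes from triples of the form $(h \otimes a, f \otimes b, f \otimes c)$: here $[h \otimes a, f \otimes b] = [h,f]\otimes ab = f \otimes ab$ (up to sign --- in characteristic $2$ signs are irrelevant) and $[f \otimes b, f \otimes c] = 0$, so $d\varphi(h\otimes a, f \otimes b, f \otimes c) = \varphi(f \otimes ab, f \otimes c) + \varphi(f \otimes ac, f \otimes b) = \beta(ab,c) + \beta(ac,b)$. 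Setting this to zero for all $a,b,c$ forces $\beta(ab,c) = \beta(ac,b)$, i.e. $\beta$ is ``balanced''; combined with alternation I then want to deduce $\beta$ factors through $A/A^{[2]}$ in each variable. Indeed, symmetry of $\beta$ in its last two slots after fixing the first, together with $\beta(a,a) = 0$, should give $\beta(a^2, b) = \beta(ab, a)$, wait --- more directly: putting $b = c$ in the balanced identity gives $\beta(ab, b) = \beta(ab,b)$, vacuous; putting $c = a$: $\beta(a^2, b)$ wait, $\beta(ab,a) = \beta(a^2,b)$; and $\beta(ab,a) = \beta(a, ab)$ has no obvious relation. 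Let me reconsider: the balanced + alternating conditions should, after the dust settles, be equivalent to $\beta(a,b) = \xi(ab)$ for a linear functional $\xi$ with $\xi(A^{[2]}) = 0$. The direction $\Leftarrow$ is a trivial check. For $\Rightarrow$: using the unit, set $c=1$ in $\beta(ab,c) = \beta(ac,b)$ to get $\beta(ab,1) = \beta(a,b)$, so $\beta(a,b) = \xi(ab)$ where $\xi(a) := \beta(a,1)$ is linear; alternation $\beta(a,a)=0$ then says $\xi(a^2) = 0$, i.e. $\xi$ kills all squares, hence (by linearity) kills $A^{[2]}$. So weight-$(-2)$ cocycles are exactly parametrized by $(A/A^{[2]})^*$.

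\textbf{Coboundaries.} Then I must check no nonzero such cocycle is a coboundary. A $1$-cochain $\psi : \ess \otimes A \to K$ of weight $-2$ would need $\psi(L_\alpha) \subseteq K$ with $\alpha = -2$, impossible since no homogeneous component of $\ess \otimes A$ has degree $-2$ (the degrees are $-1, 0, 1$). Hence $\Homol^2_{-2} = \Z^2_{-2}$ has no coboundaries to quotient by, and the identification with $(A/A^{[2]})^*$ is complete; the chosen basic cocycle is exactly that of part (\ref{en-1}).

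\textbf{Main obstacle.} I expect the genuinely fiddly point to be the bookkeeping in the cocycle computation --- verifying that after restricting to cochains supported on $(f\otimes A)\wedge(f\otimes A)$, the only nontrivial component of $d\varphi$ is the one on $(h\otimes A)\wedge (f\otimes A)\wedge(f\otimes A)$, and that the three summands of the Chevalley--Eilenberg differential there collapse (in characteristic $2$) to precisely $\beta(ab,c)+\beta(ac,b)$. Everything downstream (the use of the unit to produce $\xi$, and the vanishing of coboundaries by a degree count) is then immediate. One should also double-check at the outset that the reduction to root-respecting cochains via \cite[Chapter 1, \S 5.2]{fuchs} is legitimate here, but this is exactly the invariance-under-torus argument already invoked in the proof of Proposition \ref{prop-h2-triv}, so I would simply cite it.
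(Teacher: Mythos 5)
Your argument is correct and is essentially the paper's own proof: the same triple $h\otimes a,\, f\otimes b,\, f\otimes c$ yields $\beta(ab,c)+\beta(ac,b)=0$, substituting $c=1$ gives $\beta(a,b)=\xi(ab)$ with $\xi=\beta(\cdot\,,1)$, alternation forces $\xi(A^{[2]})=0$, and the absence of $1$-cochains of weight $-2$ rules out coboundaries. Your hesitation over the sign of the weight is harmless (the paper's displayed weight convention is applied with the opposite sign in practice), and you correctly land on $(f\otimes A)\wedge(f\otimes A)$ as the only relevant component.
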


\begin{proof}
Let $\Phi$ be a $2$-cocycle with trivial coefficients on $\ess \otimes A$ 
of weight $-2$. We may write 
$$
\Phi(f \otimes a, f \otimes b) = \alpha(a,b)
$$
for any $a,b \in A$, and for some bilinear alternating map 
$\alpha: A \times A \to K$. Writing the cocycle equation for the triple 
$h \otimes a, f \otimes b, f \otimes c$, we get 
$\alpha(ab,c) + \alpha(ac,b) = 0$, which implies $\alpha(a,b) = \xi(ab)$ for 
some linear map $\xi: A \to K$. Since $\alpha$ is alternating, we have 
$\xi(A^{[2]}) = 0$. As there are no nonzero linear maps 
$\Omega: \ess \otimes A \to K$, and, hence, coboundaries, of weight $-2$, the 
linear independence of cocycles of weight $-2$ implies their cohomological 
independence.
\end{proof}

\begin{lemma}\label{lemma-h2-triv-0}
$\Homol^2_0(\ess \otimes A, K) \simeq \widehat{\HC^1}(A)$. The basic cocycles 
can be chosen as in part (\ref{en-3}) of Proposition \ref{prop-h2-triv}.
\end{lemma}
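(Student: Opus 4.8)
The plan is to mimic the structure of the proof of Lemma~\ref{lem-0}, but in weight $0$ the computation is more involved because there are genuinely many nonzero components of the cochain, nonzero coboundaries, and the nontrivial cyclic cocycle condition to extract. First I would write down a general $2$-cocycle $\Phi$ of weight $0$ that respects the root space decomposition \eqref{eq-root-sp}: by the torus invariance already invoked in the proof of Proposition~\ref{prop-h2-triv}, $\Phi$ may be assumed to pair $L_{\overline 0}$ with $L_{\overline 0}$ and $L_{\overline 1}$ with $L_{\overline 1}$, and the weight-$0$ condition forces the nonzero components to be $\Phi(h\otimes a, h\otimes b) = \beta(a,b)$, $\Phi(e\otimes a, f\otimes b) = \alpha(a,b)$, with $\beta$ alternating (it is the diagonal of a component pairing $L_0$ with $L_0$) and $\alpha$ an arbitrary bilinear map $A\times A\to K$ (the $L_1$-with-$L_{-1}$ component). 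The components $\Phi(e\otimes a,e\otimes b)$ and $\Phi(f\otimes a,f\otimes b)$ have weight $\mp 2$ and so vanish.

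Next I would run the cocycle equation on the relevant triples of basic elements. The triple $(e\otimes a, f\otimes b, h\otimes c)$ gives a relation tying $\alpha$ and $\beta$ together; the triple $(e\otimes a, f\otimes b, f\otimes c)$ (using $[e,f]=h$, $[f,h]=f$) and its mirror with two $e$'s give relations of the form $\alpha(ab,c)+\alpha(ac,b)+\beta(\text{something}) = 0$; and $(h\otimes a, h\otimes b, h\otimes c)$ is vacuous since $[h,h]=0$. The key deductions I expect: first, that $\beta$ is determined by $\alpha$, specifically $\beta(a,b)=\alpha(a,b)+\alpha(b,a)$ (or some such symmetrization, reflecting the symmetric part of $\alpha$), so $\beta$ carries no independent information; and second, that $\alpha$ must satisfy the cyclic cocycle equation \eqref{eq-coc-a}, i.e. $\alpha(ab,c)+\alpha(ca,b)+\alpha(bc,a)=0$. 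Combined with the fact that $\alpha$ then turns out to be forced alternating (this is where I expect the main subtlety—one has to check that the symmetric part $\beta$, which is a priori allowed, is actually killed, so that $\alpha\in\widehat{\HC^1}(A)$ rather than merely $\HC^1(A)$; this is exactly the discrepancy between parts \eqref{en-3} and the weight $\pm 2$ case, and it should come out of the interplay of the two relations above together with the alternating nature of $\beta$).

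Then I would handle coboundaries. A linear map $\Omega\colon \ess\otimes A\to K$ of weight $0$ is supported on $h\otimes A$, say $\Omega(h\otimes a)=\omega(a)$ for a linear $\omega\colon A\to K$. Computing $\dcobound\Omega$ on the triple-free level, $(\dcobound\Omega)(e\otimes a, f\otimes b)=\Omega([e,f]\otimes ab)=\omega(ab)$ and $(\dcobound\Omega)(h\otimes a,h\otimes b)=\Omega([h,h]\otimes ab)=0$; so coboundaries contribute exactly the cochains with $\alpha(a,b)=\omega(ab)$ symmetric-of-product-type and $\beta=0$. One checks these are genuine cocycles (they satisfy \eqref{eq-coc-a} automatically), and quotienting the space of weight-$0$ cocycles by them removes precisely the ``$\alpha(a,b)=\omega(ab)$'' part, leaving $\widehat{\HC^1}(A)$. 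Finally I would exhibit the surviving class in the normal form of part~\eqref{en-3}: $(e\otimes a)\wedge(f\otimes b)\mapsto\alpha(a,b)$ and $(h\otimes a)\wedge(h\otimes b)\mapsto\alpha(a,b)$, noting that since $\alpha$ is alternating its symmetric part is zero so $\beta=\alpha$ is consistent, and that the map $\alpha\mapsto[\Phi_\alpha]$ is a well-defined injection $\widehat{\HC^1}(A)\hookrightarrow\Homol^2_0(\ess\otimes A,K)$ whose surjectivity follows from the cocycle analysis.

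\textbf{Main obstacle.} The delicate point is the bookkeeping that forces $\alpha$ to be \emph{alternating} and not just cyclic: a naive count would suggest both an alternating cyclic cocycle and an extra alternating form $\beta$ on $A$ could survive, and one must show the cocycle relations pin $\beta$ down entirely in terms of $\alpha$'s symmetric part, which in turn is exhausted by coboundaries. Getting the signs and the precise symmetrization right in characteristic $2$—where ``alternating'' is strictly stronger than ``symmetric''—is where care is needed; the rest is the routine Chevalley--Eilenberg computation already modeled in Lemma~\ref{lem-0}.
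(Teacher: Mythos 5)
Your setup, your key triple, and your identification of the weight-zero coboundaries all coincide with the paper's proof, so the plan is the right one; but the two computations you leave as ``expected'' are exactly where the write-up goes wrong, and they need to be fixed for the argument to close. First, the triples $(e\otimes a, f\otimes b, f\otimes c)$ and their mirrors with two $e$'s contribute nothing: every term of $\dcobound\Phi$ on such a triple lands in the components $\Phi(h\otimes\cdot,\,f\otimes\cdot)$ or $\Phi(f\otimes\cdot,\,f\otimes\cdot)$, which have weights $1$ and $2$ and therefore vanish for a weight-$0$ cochain. The only nonvacuous relation is the one coming from $(e\otimes a, h\otimes b, f\otimes c)$, namely $\alpha(ab,c)+\beta(ac,b)+\alpha(bc,a)=0$. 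Second, your guessed identity $\beta(a,b)=\alpha(a,b)+\alpha(b,a)$ cannot be correct: for the surviving classes $\alpha$ is alternating, hence symmetric in characteristic $2$, and that formula would force $\beta=0$, contradicting the normal form in part (\ref{en-3}) where $\beta=\alpha\neq 0$.

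The fix is to reverse your order of operations and normalize by a coboundary \emph{first}: subtracting $\dcobound\Omega$ with $\Omega(h\otimes a)=\alpha(a,1)$, you may assume $\alpha(\cdot,1)=0$. Then setting $c=1$ in the displayed relation gives $\beta(a,b)=\alpha(b,a)$; since $\beta$ is alternating by construction, $\alpha$ is alternating, hence symmetric, hence $\beta=\alpha$, and the relation becomes exactly (\ref{eq-coc-a}), i.e.\ $\alpha\in\widehat{\HC^1}(A)$. Note that deriving constraints first and quotienting at the end, as you propose, genuinely fails at this point: before normalization a cocycle's $\alpha$ need not be alternating at all (a coboundary has $\alpha(a,b)=\omega(ab)$ with $\omega(a^2)$ generally nonzero), so the ``$\alpha$ is forced alternating'' step you flag as the main subtlety is only true after the coboundary has been stripped off. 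With these two corrections your argument is the paper's.
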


\begin{proof}
Let $\Phi$ be a $2$-cocycle with trivial coefficients on $\ess \otimes A$ 
of weight $0$. We may write 
\begin{align*}
\Phi(e \otimes a, f \otimes b) &= \alpha(a,b) \\
\Phi(h \otimes a, h \otimes b) &= \beta(a,b)
\end{align*}
for any $a,b\in A$, and for some bilinear maps 
$\alpha, \beta: A \times A \to K$. Since $\Phi$ is alternating, $\beta$ is 
alternating too.

Any coboundary of weight $0$ is generated by a linear map 
$\Omega: \ess \otimes A \to K$ defined by 
$\Omega(e \otimes a) = \Omega(f \otimes a) = 0$ and
$\Omega(h \otimes a) = \omega(a)$ for some linear map $\omega: A \to K$, and 
hence has the form $\dcobound\Phi(e \otimes a, f \otimes b) = \omega(ab)$, with
zero values on all other pairs of basic elements. Setting 
$\omega(a) = \alpha(a,1)$, and modifying the cocycle $\Phi$ by the respective
coboundary, we may assume $\Phi(e \otimes a, f \otimes 1) = 0$, i.e., 
$\alpha(a,1) = 0$.

Writing the cocycle equation for the triple 
$e \otimes a, h \otimes b, f \otimes c$, we get 
$$
\alpha(ab,c) + \beta(ac,b) + \alpha(bc,a) = 0
$$
for any $a,b,c \in A$, which implies $\beta = \alpha$ and 
$\alpha \in \widehat{\HC^1}(A)$, thus arriving at cocycles of type (\ref{en-3})
in Proposition \ref{prop-h2-triv}.

The equality $\Phi = \dcobound \Omega$ for some linear map 
$\Omega : \ess \otimes A \to K$ of weight $0$, yields $\alpha = 0$, which shows
that these cocycles are cohomologically dependent if and only if the 
corresponding $\alpha$'s are linearly dependent.
\end{proof}

\begin{corollary}\label{cor-h2-triv-+}
$\Homol^2_+(\ess \otimes A, K) \simeq (A/A^{[2]})^*$. The basic cocycles can be
chosen as in part (\ref{en-2}) of Proposition \ref{prop-h2-triv}.
\end{corollary}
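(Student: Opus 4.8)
The plan is to extract $\Homol^2_+(\ess \otimes A, K)$ directly from the weight decomposition already used in the proof of Proposition~\ref{prop-h2-triv}. Recall that in the standard $\mathbb Z$-grading of $\ess$, the element $f = e_1$ has weight $1$, $h = e_0$ has weight $0$, and $e = e_{-1}$ has weight $-1$, so the induced grading on $\ess \otimes A$ has components in degrees $-1, 0, 1$, and consequently $\Homol^2_k(\ess \otimes A, K)$ can be nonzero only for $-2 \le k \le 2$. By definition, $\Homol^2_+(\ess \otimes A, K) = \bigoplus_{k>0} \Homol^2_k(\ess \otimes A, K) = \Homol^2_1(\ess \otimes A, K) \oplus \Homol^2_2(\ess \otimes A, K)$, so it suffices to identify these two pieces.

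First I would dispose of weight $1$. A straightforward inspection of the Chevalley--Eilenberg complex in weight $1$ shows there are no nonzero cochains at all, or more precisely that $\Homol^2_1(\ess \otimes A, K) = 0$: the argument is the same parity/torus argument already invoked in the proof of Proposition~\ref{prop-h2-triv}. Indeed, by invariance of cohomology under the action of the torus $Kh \otimes 1$ and the root space decomposition $\ess \otimes A = L_{\overline 0} \oplus L_{\overline 1}$ of~(\ref{eq-root-sp}), we may restrict to cocycles respecting this $\mathbb Z/2$-decomposition. A weight-$1$ cochain must pair an element of $L_{\overline 0} = h \otimes A$ with an element of $L_{\overline 1} = \langle e,f\rangle \otimes A$; but then its total weight is the weight of the $\langle e,f\rangle \otimes A$ argument, which is $\pm 1$, and matching this with the required weight $1$ forces the nonzero entries onto pairs $(h\otimes a, f\otimes b)$ only. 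Checking the cocycle condition on such a cochain (using triples like $e \otimes a, h \otimes b, h \otimes c$ and $h \otimes a, h \otimes b, f \otimes c$) kills it, or exhibits it as a coboundary; in any case one obtains $\Homol^2_1 = 0$. I expect this to be routine and short.

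Second, the weight $2$ piece is handled by the symmetry of $\ess$ interchanging $e$ and $f$ (equivalently, the grading automorphism $e_i \mapsto e_{-i}$ of the multiplication table~(\ref{eq-mult})), which sends weight-$(-2)$ cohomology to weight-$2$ cohomology. Applying Lemma~\ref{lem-0}, which gives $\Homol^2_{-2}(\ess \otimes A, K) \simeq (A/A^{[2]})^*$ with basic cocycles $(f\otimes a)\wedge(f\otimes b) \mapsto \xi(ab)$ where $\xi(A^{[2]})=0$, we obtain $\Homol^2_2(\ess \otimes A, K) \simeq (A/A^{[2]})^*$, with basic cocycles of the form $(e\otimes a)\wedge(e\otimes b) \mapsto \xi(ab)$, which is exactly type~(\ref{en-2}) of Proposition~\ref{prop-h2-triv}. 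Combining this with $\Homol^2_1 = 0$ yields $\Homol^2_+(\ess \otimes A, K) \simeq (A/A^{[2]})^*$ as claimed, with the stated basic cocycles.

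The only genuine point requiring care, rather than invocation of earlier results, is the vanishing $\Homol^2_1(\ess \otimes A, K) = 0$, and even there the torus-invariance reduction makes the surviving complex so small that the verification is a short direct computation. So I do not anticipate a real obstacle; the corollary is essentially a bookkeeping consequence of Lemma~\ref{lem-0}, Lemma~\ref{lemma-h2-triv-0}, and the $e \leftrightarrow f$ symmetry already exploited in the proof of Proposition~\ref{prop-h2-triv}.
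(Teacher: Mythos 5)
Your proposal is correct and follows essentially the same route as the paper: the positive part reduces to weight $2$ because the torus-invariance argument in the proof of Proposition \ref{prop-h2-triv} already kills the odd weights, and the weight-$2$ piece is obtained from Lemma \ref{lem-0} by the $e \leftrightarrow f$ symmetry. The paper's own proof is just the one-line observation $\Homol^2_+(\ess \otimes A, K) = \Homol^2_2(\ess \otimes A, K)$, relying on exactly these two points.
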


\begin{proof}
We have $\Homol^2_+(\ess \otimes A, K) = \Homol^2_2(\ess \otimes A, K)$.
\end{proof}

\section{
Low-degree cohomology of $\ess \otimes A$ with adjoint coefficients
}\label{sec-h2-adj}

Now let us turn to cohomology of $\ess \otimes A$ with coefficients in the 
adjoint module. A few general remarks are in order.

As noted in \cite[Lemma 1.1]{vavilov-fest}, for any Lie algebra $L$
and non-negative integer $k$, the space $\Homol^k(L \otimes A, L \otimes A)$ 
contains $\Homol^k(L,L) \otimes A$. On the level of cocycles, this embedding
is given by 
\begin{equation}\label{eq-coc-1}
(x_1 \otimes a_1) \wedge \dots \wedge (x_k \otimes a_k) \mapsto 
\varphi(x_1, \dots, x_k) \otimes a_1 \cdots a_k u
\end{equation}
for any $x_1, \dots, x_k \in L$, $a_1, \dots, a_k \in A$, where $\varphi$ is
some $k$-cocycle $\varphi$ on $L$, and $u\in A$.

Similarly, one can show that $\Homol^k(L \otimes A, L \otimes A)$ always 
contains $\Cent(L) \otimes \Har^k(A,A)$. Here $\Cent(L)$ is the centroid of $L$,
i.e., the space of linear maps $\omega: L \to L$ such that 
$\omega([x,y]) = [x,\omega(y)]$ for any $x,y\in L$. The latter embedding is, 
however, more involved -- due to the fact that higher Harrison cohomology is 
determined in terms of complicated permutations and is sensitive to 
characteristic of the ground field -- and we restrict ourselves to the cases of
low cohomology degree (as that is all what we need here anyway): on the level of $1$- and $2$-cocycles, the asserted embedding is given by
\begin{equation}\label{eq-coc-2}
x \otimes a \mapsto \omega(x) \otimes d(a)
\end{equation}
and
\begin{equation}\label{eq-coc-3}
(x \otimes a) \wedge (y \otimes b) \mapsto \omega([x,y]) \otimes \alpha(a,b) 
\end{equation}
respectively, for any $x,y\in L$, $a,b\in A$, and where $\omega \in \Cent(L)$,
$d\in \Der(A) = \Har^1(A,A)$, $\alpha \in \Har^2(A,A)$. 

Moreover, the cocycles of types (\ref{eq-coc-1}) and 
(\ref{eq-coc-2})-(\ref{eq-coc-3}) are cohomologically independent, so 
$\Homol^k(L \otimes A, L \otimes A)$ always contains the direct sum 
\begin{equation}\label{eq-summand}
\Big(\Homol^k(L,L) \otimes A\Big) \oplus 
\Big(\Cent(L) \otimes \Har^k(A,A)\Big) .
\end{equation}
For an (easy straightforward) proof, see \cite[Theorem 2.1]{low} ($k=1$) and \cite[\S 2]{me-deformations} ($k=2$).

We will prove that in the case of interest that is all what we have:

\begin{proposition}\label{prop}
For $n=1,2$, we have
$\Homol^n(\ess\otimes A, \ess\otimes A) \simeq 
\Big(\Homol^n(\ess,\ess) \otimes A\Big) \oplus \Har^n(A,A)$.
\end{proposition}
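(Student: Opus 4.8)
The plan is to compute $\Homol^n(\ess\otimes A,\ess\otimes A)$ for $n=1,2$ by the same grading-and-torus reduction used for trivial coefficients in \S\ref{sec-h2-zass}, and to match the surviving cocycles against the summands in (\ref{eq-summand}). First I would note that, since $\Cent(\ess)\simeq K$ (the algebra $\ess$ is simple, hence central simple over a perfect field, and in any case one checks directly from (\ref{eq-mult}) that the centroid is spanned by the identity), the term $\Cent(\ess)\otimes\Har^n(A,A)$ in (\ref{eq-summand}) is just $\Har^n(A,A)$, so the asserted isomorphism is exactly the statement that the lower bound (\ref{eq-summand}) is attained. Thus it suffices to prove the reverse inclusion $\Homol^n(\ess\otimes A,\ess\otimes A)\subseteq\big(\Homol^n(\ess,\ess)\otimes A\big)\oplus\Har^n(A,A)$.

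Next I would set up the bigrading. The standard $\mathbb Z$-grading on $\ess$ (with $e,h,f$ in degrees $-1,0,1$) makes $\ess\otimes A$ graded, and the adjoint module is graded accordingly, so (\ref{eq-shift}) gives $\Homol^n(\ess\otimes A,\ess\otimes A)=\bigoplus_k\Homol^n_k$; for $n=1$ only $-1\le k\le 1$ can be nonzero, and for $n=2$ only $-3\le k\le 3$. Simultaneously, the root-space decomposition (\ref{eq-root-sp}) with respect to the torus $h\otimes 1$ lets me invoke the invariance of cohomology under the torus action (\cite[Chapter 1, \S 5.2]{fuchs}) to restrict to cochains compatible with the $\mathbb Z/2$-grading $L_{\overline 0}\oplus L_{\overline 1}$. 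Combining the two gradings cuts the space of cochains one has to analyze down to a handful of weight components. For $n=1$ this is quick: a derivation $D$ of $\ess\otimes A$ of a given weight is determined by a few linear maps $A\to A$, the cocycle (= derivation) equations force these to be (multiples of) a single derivation $d\in\Der(A)$ plus an element of $\Der(\ess)\otimes A=\ad(\ess)\otimes A$ (here one uses $\Homol^1(\ess,\ess)=0$, i.e.\ all derivations of $\ess$ are inner, which is a small separate check from (\ref{eq-mult})), and $\Har^1(A,A)=\Der(A)$, giving the $n=1$ case — this also reproves \cite[Theorem 2.1]{low} in this instance and serves as a warm-up.

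For $n=2$ I would treat the weight components $k=\pm1,\pm2,\pm3$ and $k=0$ in turn, each split further by the $\mathbb Z/2$-grading. In each component a cocycle is encoded by a small number of bilinear maps $A\times A\to A$; writing the cocycle equation on the relevant triples of basic elements of $\ess$ tensored with arbitrary elements of $A$ produces a system of functional equations in these maps. The recurring phenomena — already visible in Lemmas \ref{lem-0} and \ref{lemma-h2-triv-0} — are that "alternating + the $\ess$-bracket being surjective onto a fixed root space" collapses a map to one of the form $(a,b)\mapsto(\text{something})(ab)$, and that subtracting an explicit coboundary (generated by a linear map $\ess\otimes A\to\ess\otimes A$ of the appropriate weight, normalized using the unit $1\in A$) kills the "diagonal" value $\alpha(\cdot,1)$. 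After these normalizations, the outer weight-$0$ part of the surviving cocycle must be a symmetric Hochschild $2$-cocycle of $A$, i.e.\ land in $\Har^2(A,A)$ via (\ref{eq-coc-3}), and the remaining part must be of the form (\ref{eq-coc-1}) with $\varphi$ running over a basis of $\Homol^2(\ess,\ess)$; cohomological independence of the two families is quoted from \cite[\S2]{me-deformations}. The last ingredient is to know $\Homol^2(\ess,\ess)$ itself concretely — this was flagged in the introduction as relevant — so that one can recognize exactly which cochains (\ref{eq-coc-1}) produce; I would compute it directly from the $4$- (really $3$-) dimensional bracket (\ref{eq-mult}).

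\textbf{Main obstacle.} The genuinely laborious part is the weight-$0$ (and adjacent low-weight) analysis of $\Homol^2_0(\ess\otimes A,\ess\otimes A)$: there the cochain has many scalar components (one bilinear map $A\times A\to A$ for each pair of basic elements of $\ess$ in complementary degrees, plus the "Cartan" part $h\otimes A$), the cocycle equations couple them, and one must carefully choose coboundaries and exploit the presence of the unit to decouple and identify the Harrison part — all in characteristic $2$, where symmetry and skew-symmetry coincide and several terms that would cancel in odd characteristic instead reinforce. This is exactly the place where the bookkeeping (rather than any deep idea) is the bottleneck, and where I expect the proof to spend most of its length.
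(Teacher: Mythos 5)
Your overall strategy (reduce to the reverse inclusion of (\ref{eq-summand}), use the standard grading together with the torus $h\otimes 1$ to cut down to a few weight components, encode cocycles by bilinear maps $A\times A\to A$, normalize with coboundaries using the unit of $A$, and match the weight-$0$ part with $\Har^2(A,A)$ and the weight-$\pm 2$ parts with $\Homol^2(\ess,\ess)\otimes A$) is essentially the approach the paper takes for $n=2$. However, there is a genuine error in your treatment of $n=1$: you assert that $\Homol^1(\ess,\ess)=0$, i.e.\ that all derivations of $\ess$ are inner, and that $\Der(\ess)\otimes A=\ad(\ess)\otimes A$. In characteristic $2$ this is false, and it is one of the peculiarities the paper is built around: $\Homol^1(\ess,\ess)$ is $2$-dimensional, spanned by the outer derivations $(\ad e)^2$ and $(\ad f)^2$ (so $\Der(\ess)$ is $5$-dimensional, not $3$-dimensional). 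One checks directly from (\ref{eq-mult}) that $(\ad f)^2$ sends $e\mapsto f$ and kills $h$ and $f$, whereas any inner derivation sends $e$ into $\langle e,h\rangle$; hence $(\ad f)^2$ is not inner. With your assumption, your $n=1$ computation would output $\Homol^1(\ess\otimes A,\ess\otimes A)\simeq\Der(A)$, which omits the nonzero summand $\Homol^1(\ess,\ess)\otimes A$ and contradicts the very statement you are proving. The correct $n=1$ argument (which the paper gets by quoting a characteristic-free general result for perfect central $L$) must account for derivations of weight $\pm 2$ coming from $(\ad e)^2\otimes A$ and $(\ad f)^2\otimes A$; these are exactly the cocycles your torus-plus-grading bookkeeping would have to detect rather than discard.

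A second, smaller caution: this same phenomenon recurs at $n=2$. You correctly plan to compute $\Homol^2(\ess,\ess)$ directly, but be aware that it, too, is nonzero ($2$-dimensional, with basic cocycles $f\wedge h\mapsto e$ and $e\wedge h\mapsto f$, of weights $-2$ and $2$), so the weight-$\pm 2$ components of $\Homol^2(\ess\otimes A,\ess\otimes A)$ are each isomorphic to $A$ and genuinely contribute; if you carry over the reflex that ``$\Sl(2)$-type cohomology vanishes'' from big characteristic, the $n=2$ count will come out wrong in the same way. Once the two outer derivations and the two nontrivial $2$-cocycles of $\ess$ are put back in, the rest of your plan goes through and coincides in substance with the paper's proof.
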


\begin{remark}\hfill

{\bf 1)}
As $\ess$ is, obviously, central, i.e., its centroid coincides with the ground
field, the second summand in (\ref{eq-summand}) is reduced to a mere 
$\Har^n(A,A)$.

{\bf 2)}
This is similar with the cases of characteristics $0$ and $>3$. 
In these cases, for $n=1,2$,
\begin{equation}\label{eq-gen}
\Homol^n(\Sl(2) \otimes A, \Sl(2) \otimes A) \simeq \Har^n(A,A) .
\end{equation}
The case $n=1$ is a particular case of many previous results in the literature,
of which \cite[Theorem 2.1]{low} is, perhaps, the most general one (see the 
proof of Proposition \ref{prop-h1} below). The case $n=2$ is a particular case 
of \cite{C} (where similar formulas are obtained for all finite-dimensional simple Lie algebras over an 
algebraically closed field of characteristic zero), and of \cite{bennis} (where
cohomology up to degree $4$ is computed). The proofs there utilize complete reducibility
of representations of $\Sl(2)$ and other facts peculiar to characteristic
zero case.

In the case of characteristic $>3$, the formula (\ref{eq-gen}) is established
in \cite[Proposition 2.8]{me-deformations}. Our computations are organized 
similar to those in \cite{me-deformations}, but characteristic $2$ case turns 
out to be more cumbersome. The reason for this is twofold. First, unlike the
cases of big characteristic, $\Homol^2(\ess,\ess)$ is not zero (see 
Lemma \ref{lemma-ess} below). Second, the root space decomposition 
(\ref{eq-root-sp}) with respect to the toral element $h \otimes 1$ is more 
coarse than in the big characteristic case (the roots $-1$ and $1$ coalesce).
\end{remark}

The case $n=1$ of Proposition \ref{prop} is a particular case of the general
statement:

\begin{proposition}\label{prop-h1}
If $L$ is perfect and central Lie algebra, then 
\begin{equation*}
\Homol^1(L \otimes A, L \otimes A) \simeq 
\Big(\Homol^1(L,L) \otimes A\Big) \oplus \Der(A) .
\end{equation*}
\end{proposition}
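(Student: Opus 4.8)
The plan is to analyze an arbitrary $1$-cocycle $\varphi\colon L\otimes A \to L\otimes A$ and show that, modulo coboundaries, it decomposes into a piece coming from $\Homol^1(L,L)\otimes A$ and a piece coming from $\Der(A)$, as in formula \eqref{eq-coc-1} (with $k=1$) and \eqref{eq-coc-2}; since $L$ is central, $\Cent(L)=K$ and the summand \eqref{eq-summand} is exactly $(\Homol^1(L,L)\otimes A)\oplus\Der(A)$, so it remains only to prove surjectivity onto the cohomology. First I would fix a basis $\{x_i\}$ of $L$ and write $\varphi(x_i\otimes a)=\sum_j x_j\otimes \psi_{ij}(a)$ for bilinear maps, or more invariantly view $\varphi$ as a family of linear maps $A\to L\otimes A$. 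The cocycle identity for $\varphi$ evaluated on $x_i\otimes a$, $x_j\otimes b$ reads
\begin{equation*}
\varphi([x_i,x_j]\otimes ab) = [x_i\otimes a,\varphi(x_j\otimes b)] + [\varphi(x_i\otimes a),x_j\otimes b].
\end{equation*}

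The key step is to exploit that $L$ is perfect: every element of $L$ is a sum of brackets, so the values $\varphi([x_i,x_j]\otimes c)$ determine $\varphi$ entirely, and perfectness lets us feed the cocycle identity back into itself. Concretely, I would first restrict $\varphi$ to the subalgebra $L\otimes 1 \cong L$ (using the unit of $A$): this gives a $1$-cocycle of $L$ with values in the $L$-module $L\otimes A = \bigoplus L\otimes (\text{basis of }A)$, which by $\Homol^1(L,L\otimes A)\cong \Homol^1(L,L)\otimes A$ (finite-dimensional $L$, or a direct-sum argument) is, modulo a coboundary coming from an element of $L\otimes A$, a combination of the standard cocycles $x\mapsto \varphi_0(x)\otimes u$. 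After subtracting such a coboundary and such standard cocycles, I may assume $\varphi$ vanishes on $L\otimes 1$. Then the cocycle identity, applied with one argument in $L\otimes 1$, shows that $a\mapsto \varphi(x\otimes a)$ is, for each $x$, an $L$-module-type derivation compatible across all $x$; combining these with perfectness of $L$ forces $\varphi(x\otimes a) = x\otimes d(a)$ for a single linear map $d\colon A\to A$, and the remaining cocycle identity on $x_i\otimes a$, $x_j\otimes b$ collapses (again using perfectness and centrality, so that $[x_i,x_j]$ ranges over all of $L$ and the "coefficient" is forced to be the same) to the Leibniz rule $d(ab)=a\,d(b)+d(a)\,b$, i.e.\ $d\in\Der(A)$. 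Finally I would check that the resulting map is injective on cohomology classes, which is the independence half of \eqref{eq-summand} and is routine: a coboundary supported as $x\otimes d(a)$ or $\varphi_0(x)\otimes u$ forces $d=0$ respectively $\varphi_0$ to be a coboundary of $L$, by looking at the restriction to $L\otimes 1$ and at the $A$-degrees.

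The main obstacle I anticipate is the middle step: reducing a cocycle that kills $L\otimes 1$ to the form $x\otimes d(a)$. The delicate point is that a priori $\varphi(x\otimes a)$ can have components in every basis direction of $L$, and one must use the full force of perfectness together with the module structure to collapse these. The cleanest route is probably to exploit that $L$ acts on $L\otimes A$ through the first factor only, so that for $x\otimes 1$ acting, one gets a system relating $\varphi(x\otimes a)$ to $\varphi([x,y]\otimes a)$ for all $y$; writing $L=[L,L]$ and iterating turns this into a statement that $\varphi$ is, after subtracting the already-identified standard pieces, an intertwiner landing in the "$L\otimes(\cdot)$ diagonal", from which $d$ emerges as a scalar-valued assignment $A\to A$ independent of the chosen $x$. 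Everything else — verifying the Leibniz rule, handling the unit normalization $d(1)=0$, checking cohomological independence — is bookkeeping.
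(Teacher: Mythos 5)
Your argument is correct, but it is not the route the paper takes: the paper simply invokes a general formula for $\Homol^1(L\otimes A,L\otimes A)$ from an earlier paper of the second author (\cite[Theorem 2.1 and Corollary 2.2]{low}), observes that its proof is characteristic-free, and then specializes — perfectness kills the commutant term and centrality reduces $\Cent(L)\otimes\Der(A)$ to $\Der(A)$. You instead give a self-contained cocycle computation, and the step you single out as the main obstacle does go through cleanly: once $\varphi$ is normalized to vanish on $L\otimes 1$, the cocycle identity on the pair $x\otimes 1,\ y\otimes a$ gives $\varphi([x,y]\otimes a)=[x\otimes 1,\varphi(y\otimes a)]$, and writing $\varphi(\,\cdot\,\otimes a)$ in components over a basis of $A$, each component map $L\to L$ satisfies exactly the paper's defining condition of the centroid, hence is a scalar multiple of the identity by centrality; this is where $\varphi(x\otimes a)=x\otimes d(a)$ comes from, with no iteration over $[L,L]$ needed. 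Perfectness is then what you actually use twice: once to get the Leibniz rule for $d$ from $[x,y]\otimes\bigl(d(ab)+a\,d(b)+d(a)\,b\bigr)=0$ (you need some nonzero bracket), and once in the independence step, where applying a functional of $A$ to the identity $x\otimes d(a)=[x\otimes a,m]$ yields $\mathrm{id}_L=\ad w$ for some $w\in L$, which is impossible for a nonzero perfect $L$. The only soft spots are bookkeeping ones you already flag: the identification $\Homol^1(L,L\otimes A)\simeq\Homol^1(L,L)\otimes A$ needs $L$ finite-dimensional (or finitely generated) when $A$ is infinite-dimensional, which is harmless in the paper's context. What your approach buys is independence from the external reference and an explicit display of where each hypothesis enters; what the paper's approach buys is brevity and a statement valid without the perfect/central hypotheses (at the cost of extra terms).
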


\begin{proof}
This is a direct consequence of \cite[Corollary 2.2]{low}. The standing
assumption of \cite{low} is characteristic $\ne 2,3$ of the ground field, but an
easy inspection of the proof of Theorem 2.1 from that paper, on which 
Corollary 2.2 is based, shows that it is characteristic-free. As $L$ 
is perfect and central, the term in the general formula for 
$\Homol^1(L\otimes A, L\otimes A)$ involving commutant vanishes, and the 
centroid of $L$ coincides with the ground field, so we are left with the desired
isomorphism.
\end{proof}

We start the proof of the $n=2$ case with recording necessary facts about 
cohomology of $\ess$.

\begin{lemma}\label{lemma-ess}\hfill
\begin{enumerate}[\upshape(i)]
\item\label{en-ess-1}
$\Homol^1(\ess, \ess)$ is $2$-dimensional, with basic cocycles (outer 
derivations) given by
$(\ad e)^2$ and $(\ad f)^2$.  
\item
$\Homol^2(\ess, \ess)$ is $2$-dimensional, with basic cocycles given by
\begin{equation}\label{eq-coc-s}
f \wedge h \mapsto e\phantom{ .}
\end{equation}
and
\begin{equation}\label{eq-coc-s-2}
e \wedge h \mapsto f .
\end{equation}
\end{enumerate}
\end{lemma}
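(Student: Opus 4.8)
The plan is to compute $\Homol^n(\ess,\ess)$ for $n=1,2$ directly from the Chevalley--Eilenberg complex, exploiting the standard $\mathbb Z$-grading $\ess = Ke \oplus Kh \oplus Kf$ (weights $-1,0,1$) and the root space decomposition with respect to $h$. Since $\ess$ is $3$-dimensional, the cochain spaces are small: $C^1 = \Hom(\ess,\ess)$ has dimension $9$, $C^2 = \Hom(\ess\wedge\ess,\ess)$ has dimension $9$, $C^3 = \Hom(\ess\wedge\ess\wedge\ess,\ess)$ has dimension $3$. So in principle this is a finite linear-algebra computation, but I would organize it by weight to keep it transparent.

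First I would handle $n=1$. A $1$-cochain is a linear map $\varphi:\ess\to\ess$; decompose it into graded pieces of weights $-2,-1,0,1,2$. Coboundaries are the inner derivations $\ad x$, which span a $3$-dimensional space (as $\ess$ is centerless), sitting in weights $-1,0,1$. Writing the cocycle equation $\varphi([x,y]) = [\varphi(x),y] + [x,\varphi(y)]$ on the basis pairs $(e,h),(f,h),(e,f)$, I would solve weight by weight. The weight-$0$ cocycles modulo inner derivations should vanish; the interesting classes live in weights $\pm 2$, and one checks directly that $(\ad e)^2$ (weight $+2$, since it raises weight by $2$ — note in characteristic $2$ the composite $(\ad e)^2$ is a derivation because $[\ad e,\ad e]=2(\ad e)^2=0$ fails to help but the Leibniz computation $(\ad e)^2[x,y]=[(\ad e)^2 x,y]+[x,(\ad e)^2 y]+2[\ad e\, x,\ad e\, y]$ and the last term dies in characteristic $2$) and $(\ad f)^2$ (weight $-2$) are nontrivial outer derivations, and that together with the inner ones they span all of $\Z^1$. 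This gives $\dim\Homol^1(\ess,\ess)=2$.

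For $n=2$ I would again split $C^2$ by weight; the possible weights are again $-2,\dots,2$, and by the $e\leftrightarrow f$ symmetry it suffices to treat weights $0$ and $2$ (or $0$ and $-2$). In weight $2$, a cochain is determined by its value on $e\wedge h$ (which must land in the weight-$2$ component, but $\ess$ has no weight-$2$ part, so that value is $0$) and on $f\wedge h$ and $e\wedge f$ landing in appropriate components — here I need to be careful that the \emph{cohomological} weight convention shifts things, so "weight $2$" cocycles are those $\varphi$ with $\varphi(\ess_\alpha,\ess_\beta)\subseteq\ess_{\alpha+\beta-2}$. Working this out, the cocycle $f\wedge h\mapsto e$ of \eqref{eq-coc-s} has cohomological weight giving $\varphi(\ess_1,\ess_0)=\ess_{-1}$, i.e. weight $2$; I would verify it satisfies the cocycle identity (the only nontrivial triple is $e,f,h$) and is not a coboundary (coboundaries of that weight come from $1$-cochains of weight $2$, of which there are none nonzero since $\ess$ has no weight-$2$ part, so every nonzero such $2$-cocycle is noncohomologous to zero — but I must check there is only one up to scalar). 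Symmetrically $e\wedge h\mapsto f$ of \eqref{eq-coc-s-2} handles the opposite weight. The weight-$0$ part: show $\Homol^2_0(\ess,\ess)=0$ by listing the general weight-$0$ cochain (three scalar parameters, roughly $e\wedge f\mapsto \lambda h$, $e\wedge h\mapsto \mu e$, $f\wedge h\mapsto \nu f$), imposing the cocycle equations, and checking every solution is a coboundary of a weight-$0$ $1$-cochain.

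The main obstacle I anticipate is bookkeeping with the characteristic-$2$ sign conventions and the cohomological-weight shift: in characteristic $2$ alternating and symmetric coincide, $\wedge$ has no signs, and one must be careful that $(\ad e)^2$ really is a cocycle (the cross term $2[\ad e\,x,\ad e\,y]$ vanishing is exactly where characteristic $2$ is used, and this is genuinely special — in odd characteristic $(\ad e)^2$ is \emph{not} a derivation). There is also a subtlety worth double-checking: because the roots $-1$ and $1$ coalesce modulo $2$, the torus-invariance reduction only isolates cohomology of \emph{even} $\mathbb Z$-weight, so a priori weights $-2,0,2$ survive and weights $\pm 1$ are killed; I should confirm that the odd-weight cochains indeed contribute nothing, which follows from the $\mathbb Z/2$-grading by the $h$-action exactly as in the proof of Proposition~\ref{prop-h2-triv}. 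Once the weight decomposition is pinned down correctly, each graded piece is a tiny explicit linear system and the result falls out.
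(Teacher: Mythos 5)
Your overall strategy --- direct computation in the Chevalley--Eilenberg complex, organized by the $\mathbb Z$-grading and the $h$-eigenspace decomposition to kill the odd weights --- is exactly what the paper means by ``direct computations, similar to those performed below, but simpler,'' and the counts come out right: $\dim\Der(\ess)=5$ minus the $3$-dimensional space of inner derivations for part (i); a three-parameter weight-$0$ cochain space with a one-parameter cocycle condition matched by a $2$-dimensional coboundary space for the weight-$0$ part of (ii); and one-dimensional cochain spaces in weights $\pm2$. A free cross-check worth adding: since $\Homol^0(\ess,\ess)=\Z(\ess)=0$ and $\Homol^3(\ess,\ess)=0$, the Euler characteristic $3-9+9-3=0$ forces $\dim\Homol^2(\ess,\ess)=\dim\Homol^1(\ess,\ess)$ once part (i) is done, so for (ii) it suffices to exhibit two independent classes.

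One step, however, is wrong as written. You argue that $f\wedge h\mapsto e$ is not a coboundary because ``coboundaries of that weight come from $1$-cochains of weight $2$, of which there are none nonzero since $\ess$ has no weight-$2$ part.'' That premise is false: the space of $1$-cochains of that weight is $1$-dimensional, spanned by the map $f\mapsto e$, which is precisely $(\ad e)^2$ (and symmetrically $e\mapsto f=(\ad f)^2$ in the opposite weight). The conclusion survives for a different reason: this unique (up to scalar) $1$-cochain is the outer derivation from part (i), hence a $1$-cocycle, so its coboundary vanishes and the space of $2$-coboundaries in weight $\pm2$ is zero after all. Replace the ``no such cochains'' claim either by this observation or by the one-line computation
$\dcobound\omega(f,h)=\omega([f,h])+[\omega(f),h]=\mu e+\mu e=0$ for $\omega\colon f\mapsto\mu e$. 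Two related slips in the same passage: the value of such a cochain on $e\wedge h$ is forced to be $0$ not because ``$\ess$ has no weight-$2$ part'' but because the target component would have degree $\pm 3$; and $(\ad e)^2$ \emph{lowers} the $\mathbb Z$-degree by $2$ (it sends $f\in Ke_1$ to $e\in Ke_{-1}$) while $(\ad f)^2$ raises it --- worth fixing so that the pairing of the two classes in (i) with the two in (ii) under the grading comes out consistently (the paper itself states the sign convention for $\lambda$ one way and uses it the other, so be explicit about which you adopt). None of this changes the statement or the architecture of your argument.
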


\begin{proof}
Direct computations, similar to those performed below, but simpler.
\end{proof}

\begin{remark}\label{rem-lemma-ess}\hfill

{\bf 1)}
This, essentially, provides information on the whole cohomology of $\ess$ with 
coefficients in the adjoint module, as $\Homol^0(\ess,\ess)$ coincides with the
center of $\ess$ and hence is zero, and 
$\Homol^3(\ess,\ess) \simeq \Homol_0(\ess,\ess) = 0$ by the Poincar\'e duality.
(Here $\Homol_0$ denotes the $0$th \emph{homology}, not to be confused with the
subscript denoting the \emph{cohomology} weight. As this is the only place in 
the paper where homology makes -- a rather superficial -- appearance, this 
should not lead to confusion.)

{\bf 2)}
Part (i) is contained implicitly in \cite{pet}, where the $5$-dimensional 
algebra $\Der(\ess)$ is used to exhibit some phenomenon peculiar to 
characteristic $2$. Part (ii) is stated in \cite[Lemma 4.1]{bgl}.

{\bf 3)}
The algebra linearly spanned by $\ess$ and $(\ad f)^2$, is isomorphic to 
$W_1(2)$ (with $g$ being identified with $(\ad f)^2$).

{\bf 4)}
Any linear combination of the nontrivial $2$-cocycles exhibited in (ii) has 
trivial Massey bracket, and hence prolonged trivially to a (nontrivial) 
deformation of $\ess$. However, any such deformation is isomorphic to $\ess$.
Such phenomenon, under the name of \emph{semitrivial deformations}, is 
thoroughly investigated in \cite{blls}.
\end{remark}

\begin{proof}[Proof of Proposition \ref{prop}, case $n=2$]
Employing again the standard grading and root space decomposition, like in the 
proof of Proposition \ref{prop-h2-triv}, we obtain
\begin{equation*}
\Homol^2(\ess \otimes A, \ess \otimes A) = 
\bigoplus_{k\in \mathbb Z} \Homol^2_k(\ess \otimes A, \ess \otimes A) .
\end{equation*}
with nonzero terms for $k=-2,0,2$.

The next lemma is formulated in a setting slightly more general than it is 
needed for computation of cohomology of $\ess \otimes A$ of weight $2$. This 
will be used later in Proposition \ref{prop-h2-d}, where we will compute the 
positive cohomology of an extended algebra of the form (\ref{eq-semisimple}).

\begin{lemma}\label{lemma-minus-2-ext}
Let $\xi: A \to K$ and $D: A \to A$ be linear maps such that $\xi(1) = 0$ and 
$D(1) = 0$, and $\Lambda: A \times A \to A$ a bilinear alternating map such that
$\Lambda(1,A) = 0$. Then there exists a cochain 
$\Phi \in \C^2_2(\ess \otimes A, \ess \otimes A)$ such that its coboundary 
$\dcobound \Phi$ has the form:
\begin{align}\label{eq-cond}
\begin{split}
(e \otimes a) \wedge (e \otimes b) &\wedge (e \otimes c) \mapsto 
e \otimes \Big(\xi(ab)D(c) + \xi(ca)D(b) + \xi(bc)D(a)\Big)        
\\
(e \otimes a) \wedge (e \otimes b) &\wedge (h \otimes c) \mapsto 
h \otimes \xi(ab)D(c)
\\
(e \otimes a) \wedge (e \otimes b) &\wedge (f \otimes c) \mapsto   
f \otimes \Big(\xi(ab)D(c) + a\Lambda(b,c) + b\Lambda(a,c)\Big)
\\
(e \otimes a) \wedge (h \otimes b) &\wedge (h \otimes c) \mapsto   
f \otimes a\Lambda(b,c)
\end{split} 
\end{align}
if and only if 
\begin{equation}\label{eq-yoyo3}
\Lambda(a,b) = \xi(a)D(b) + \xi(b)D(a)
\end{equation}
and
\begin{multline}\label{eq-yoyo5}
\Big(\xi(a)b + \xi(b)a + \xi(ab)1\Big)D(c) +
\Big(\xi(c)a + \xi(a)c + \xi(ca)1\Big)D(b) +
\Big(\xi(b)c + \xi(c)b + \xi(bc)1\Big)D(a) \\ = 0
\end{multline}
for any $a,b,c \in A$. 

Moreover, in this case $\Phi$ is equal to the sum of a $2$-coboundary and a map
of the form
\begin{align*}
\begin{split}
(e \otimes a) \wedge (e \otimes b) &\mapsto h \otimes 
\Big(\xi(a)D(b) + \xi(b)D(a)\Big)
\\
(e \otimes a) \wedge (h \otimes b) &\mapsto f \otimes \Big(abv + \xi(b)D(a)\Big)
\end{split}
\end{align*}
for some $v\in A$. 
\end{lemma}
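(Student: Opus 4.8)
\textbf{Proof proposal for Lemma \ref{lemma-minus-2-ext}.}

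The plan is to work directly with the Chevalley--Eilenberg complex in the relevant weight, exploiting the torus action as in the proof of Proposition \ref{prop-h2-triv}. Since we are looking at cochains of weight $2$, and since the root space decomposition (\ref{eq-root-sp}) with respect to $h \otimes 1$ lets us restrict to cochains respecting the $\mathbb Z/2$-grading, a cochain $\Phi \in \C^2_2(\ess \otimes A, \ess \otimes A)$ is determined by its values on the pairs $(e \otimes a) \wedge (e \otimes b)$, $(e \otimes a) \wedge (h \otimes b)$, and $(h \otimes a) \wedge (h \otimes b)$; weight $2$ forces the targets to be $h \otimes A$, $f \otimes A$, and (vanishing, since there is no component of $\ess$ of weight $3$) respectively. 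So I would write
\begin{align*}
\Phi(e \otimes a, e \otimes b) &= h \otimes \mu(a,b), \\
\Phi(e \otimes a, h \otimes b) &= f \otimes \nu(a,b),
\end{align*}
with $\mu$ alternating and $\nu$ an arbitrary bilinear map $A \times A \to A$, and $\Phi$ zero on the remaining pair. The first task is to compute $\dcobound\Phi$ on each of the four triples listed in (\ref{eq-cond}); this is a routine but careful application of the coboundary formula, using the multiplication table (\ref{eq-mult}) and the module action. Matching the result with the prescribed right-hand sides of (\ref{eq-cond}) gives a system of bilinear identities relating $\mu$, $\nu$ and the data $\xi, D, \Lambda$.

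Next I would solve that system. Testing the equation coming from the $(e,e,h)$-triple against $c = 1$ (recall $\xi(1) = 0$, $D(1) = 0$) should pin down $\mu$ up to the freedom of a coboundary, identifying the ``honest'' part of $\mu$ with $a \mapsto \xi(a)D(b) + \xi(b)D(a)$; similarly the $(e,h,h)$-triple equation, specialized appropriately, identifies $\nu(a,b)$ up to coboundary with $abv + \xi(b)D(a)$ for some $v \in A$ (the element $v$ being exactly the residual freedom $\nu(1,1) =: v$, which is not constrained because no triple forces it). Substituting these back into the remaining equations, the $(e,e,f)$-triple forces $\Lambda(a,b) = \xi(a)D(b) + \xi(b)D(a)$, which is (\ref{eq-yoyo3}), and then the $(e,e,e)$-triple — after using (\ref{eq-yoyo3}) to rewrite the $a\Lambda(b,c)$-type terms — collapses to the cyclic identity (\ref{eq-yoyo5}). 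The converse direction (that (\ref{eq-yoyo3})--(\ref{eq-yoyo5}) are sufficient) is then just running the same computation backwards: define $\mu$, $\nu$ by the formulas just obtained, and verify that the four components of $\dcobound\Phi$ come out as in (\ref{eq-cond}); the two identities are precisely what is needed to kill the extra terms.

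The main obstacle, I expect, is purely the bookkeeping: the coboundary of a $2$-cochain on $\ess \otimes A$ involves six ordered terms per triple, the module action mixes the $\ess$-factor with the $A$-factor, and in characteristic $2$ every sign is $+$, so cancellations have to be tracked by hand rather than read off from antisymmetry. In particular, on the $(e,e,e)$-triple several terms of the form $h \otimes (\dots)D(\dots)$ produced by the action of $e \otimes A$ on $h \otimes \mu(\cdot,\cdot)$ must be reorganized into the symmetric expression $\xi(ab)D(c) + \xi(ca)D(b) + \xi(bc)D(a)$, and this is exactly where identity (\ref{eq-yoyo5}) — essentially a ``associativity defect of $\xi$ against $D$'' condition — is forced. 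Once that triple is handled, the ``moreover'' clause about $\Phi$ being a coboundary plus the displayed map is immediate, since all the coboundary freedom was already isolated when solving for $\mu$ and $\nu$, and the leftover $v \in A$ is the one genuine parameter. I would also double-check that the coboundary of a weight-$2$ $1$-cochain $\ess \otimes A \to \ess \otimes A$ indeed has the two-component shape claimed (a $1$-cochain of weight $2$ sends $e \otimes A \to h \otimes A$ and $h \otimes A \to f \otimes A$), so that the phrase ``sum of a $2$-coboundary and a map of the form \dots'' is literally correct.
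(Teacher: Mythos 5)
Your plan is essentially the paper's proof: write $\Phi(e\otimes a,e\otimes b)=h\otimes\mu(a,b)$ and $\Phi(e\otimes a,h\otimes b)=f\otimes\nu(a,b)$ (the weight-$2$ condition kills all other components), evaluate $\dcobound\Phi$ on the four listed triples, and repeatedly substitute $1$ for one of the arguments. Two details in your sketch are off, though neither breaks the strategy. First, the $(e,e,h)$-triple at $c=1$ does \emph{not} pin down $\mu$: the $\mu$-terms it produces are $\mu(ca,b)+\mu(bc,a)$, which at $c=1$ reduce to $\mu(a,b)+\mu(b,a)=0$ because $\mu$ is alternating (hence symmetric in characteristic $2$); what survives is only the constraint $a\nu(b,1)+b\nu(a,1)=0$ on $\nu(\cdot,1)$. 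The identification $\mu(a,b)=\xi(a)D(b)+\xi(b)D(a)$ comes instead from the $(e,e,e)$-triple at $c=1$, after normalizing $\mu(1,\cdot)=0$ by a coboundary, and $\nu$ is recovered by combining the $(e,e,f)$- and $(e,h,h)$-triples at $a=1$ with the $(e,e,h)$-triple at $b=1$. Second, the coboundary freedom you describe is that of a weight-$1$ cochain, not a weight-$2$ one: a $1$-cochain of weight $2$ sends $e\otimes A$ into $f\otimes A$ and annihilates $h\otimes A$ and $f\otimes A$ (the map $e\otimes A\to h\otimes A$, $h\otimes A\to f\otimes A$ raises the grading only by $1$). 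Its coboundary does contribute to both the $e\wedge e$ and $e\wedge h$ components, which is exactly the freedom used to normalize $\mu(1,\cdot)=0$ and which accounts for the ``sum of a $2$-coboundary'' in the statement; with that correction the ``moreover'' clause follows as you say.
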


\begin{proof}
We may write
\begin{gather*}
\Phi(e \otimes a, e \otimes b) = h \otimes \beta(a,b)   \\
\Phi(e \otimes a, h \otimes b) = f \otimes \alpha(a,b)
\end{gather*}
for any $a,b \in A$, and for some bilinear maps 
$\alpha, \beta: A \times A \to A$, where $\beta$ is alternating.

Modifying $\Phi$ by the coboundary $\dcobound\Omega$, where the cochain
$\Omega: \ess \otimes A \to \ess \otimes A$ is defined as follows:
$\Omega (e \otimes a) = f \otimes \beta(1,a)$, and 
$\Omega (f \otimes A) = \Omega (h \otimes A) = 0$, we may assume that, up to 
coboundaries, $\Phi (e \otimes 1, e \otimes a) = 0$, i.e., $\beta(1,a) = 0$.

Evaluating $\dcobound \Phi$ for all $4$ triples present in the condition 
(\ref{eq-cond}), we get respectively:
\begin{gather}
\beta(a,b)c + \beta(c,a)b + \beta(b,c)a = 
\xi(ab)D(c) + \xi(ca)D(b) + \xi(bc)D(a) \label{eq-fff}
\\
\beta(ac,b) + \beta(bc,a) + b\alpha(a,c) + a\alpha(b,c) = \xi(ab)D(c) 
\label{eq-ffh} \\
\alpha(b,ac) + \alpha(a,bc) + c\beta(a,b) = 
\xi(ab)D(c) + a\Lambda(b,c) + b\Lambda(a,c) \label{eq-one} 
\\
\alpha(ab,c) + \alpha(ac,b) + c\alpha(a,b) + b\alpha(a,c) = a\Lambda(b,c)
\label{eq-two}
\end{gather}
for any $a,b,c \in A$. (Since $\Phi$ is of weight $2$, the cocycle equation 
$\dcobound\Phi = 0$ for other basic triples is satisfied automatically.)

Substituting $c=1$ in (\ref{eq-fff}), we get
\begin{equation}\label{eq-yoyo4}
\beta(a,b) = \xi(a)D(b) + \xi(b)D(a)
\end{equation}
for any $a,b \in A$. Substituting $a=1$ in (\ref{eq-one}), we get
$$
\alpha(b,c) = \alpha(1,bc) + \xi(b)D(c) + \Lambda(b,c)
$$
for any $b,c \in A$. Substituting the latter equality in (\ref{eq-two}) with 
$a=1$, we get
\begin{equation}\label{eq-three}
b\alpha(1,c) + c\alpha(1,b) = \xi(b)D(c) + \xi(c)D(b) + \Lambda(b,c)
\end{equation}
for any $b,c \in A$. Setting in the latter equality $c=1$, we get 
\begin{equation}\label{eq-1a}
\alpha(1,b) = b\alpha(1,1)
\end{equation}
for any $b \in A$. Substituting this back in (\ref{eq-three}), we infer that the
left-hand side of that equality vanish, and hence (\ref{eq-yoyo3}) holds. Due to
(\ref{eq-yoyo4}), $\beta = \Lambda$.

Substituting $b=1$ in (\ref{eq-ffh}), and taking into account (\ref{eq-yoyo4}) 
and (\ref{eq-1a}), we get
$$
\alpha(a,c) = ac\alpha(1,1) + \xi(c)D(a)
$$
for any $a,c \in A$.

Now each of (\ref{eq-fff}), (\ref{eq-ffh}), and (\ref{eq-one}) is equivalent to 
(\ref{eq-yoyo5}), and (\ref{eq-two}) is satisfied automatically.
\end{proof}

\begin{corollary}\label{cor-minus-2}
$\Homol^2_{2} (\ess \otimes A, \ess \otimes A) \simeq A$. The basic cocycles can
be chosen as
\begin{equation}\label{eq-cocycle}
(e \otimes a) \wedge (h \otimes b) \mapsto f \otimes abv
\end{equation}
for some $v\in A$.
\end {corollary}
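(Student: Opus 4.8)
\textbf{Proof plan for Corollary \ref{cor-minus-2}.}
The plan is to feed Lemma \ref{lemma-minus-2-ext} back into itself: that lemma already tells us that a cochain $\Phi \in \C^2_2(\ess\otimes A,\ess\otimes A)$ is a cocycle precisely when its coboundary vanishes on the four triples listed in (\ref{eq-cond}), i.e.\ when the data $(\xi,D,\Lambda)$ can be taken to be identically zero. Applying the ``only if'' direction of the lemma with $\xi = D = \Lambda = 0$, every $2$-cocycle of weight $2$ is, modulo a coboundary, of the reduced form
\begin{align*}
(e\otimes a)\wedge(e\otimes b) &\mapsto h\otimes 0 = 0 \\
(e\otimes a)\wedge(h\otimes b) &\mapsto f\otimes abv
\end{align*}
for some $v\in A$; this is exactly the cocycle (\ref{eq-cocycle}). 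So the first step is just to invoke the last sentence of Lemma \ref{lemma-minus-2-ext} in the special case of zero parameters, which immediately produces the surjection $A \twoheadrightarrow \Homol^2_2(\ess\otimes A,\ess\otimes A)$, $v \mapsto [\Phi_v]$.

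Next I would check that the maps (\ref{eq-cocycle}) really are cocycles — but this is automatic, since $\Phi_v$ is the image of $\ad: \ess\to\ess$ under the embedding (\ref{eq-coc-3}) composed with the Harrison $2$-cochain $\alpha(a,b) = abv$ (which, being symmetric and in fact a Hochschild coboundary up to the unit, is a legitimate Harrison $2$-cocycle); alternatively, one verifies (\ref{eq-fff})--(\ref{eq-two}) directly with $\alpha(a,b)=abv$, $\beta = 0$, $\xi=D=\Lambda=0$, which is a one-line check. The only real content is injectivity of $v\mapsto[\Phi_v]$: I must show that if $\Phi_v$ is a coboundary of weight $2$, then $v=0$. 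A coboundary of weight $2$ comes from a linear map $\Omega:\ess\otimes A\to\ess\otimes A$ of weight $2$, which can only send $e\otimes A$ into $h\otimes A$ (call that component $a\mapsto h\otimes\mu(a)$) and $h\otimes A$ into $f\otimes A$ (component $a\mapsto f\otimes\nu(a)$), with $e\otimes A$ and $f\otimes A$ and the rest forced to lower-weight targets and hence zero at weight $2$. Writing out $\dcobound\Omega$ on $(e\otimes a)\wedge(h\otimes b)$ using (\ref{eq-mult}) gives a expression of the shape $f\otimes(\text{linear in }\mu,\nu)$ that is forced to equal $f\otimes abv$ for all $a,b$; specialising $a=b=1$ and comparing shows $v$ is a value of these linear maps, and then a second specialisation (e.g.\ $b=1$, general $a$) forces $v=0$. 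I expect this normalisation bookkeeping — keeping track of which components of $\Omega$ survive at weight $2$ and extracting the contradiction — to be the only place requiring care, and it is routine.

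Finally I would assemble: the reduction via Lemma \ref{lemma-minus-2-ext} gives ``every class is $[\Phi_v]$ for some $v$'', the direct check gives ``each $\Phi_v$ is a cocycle'', and the coboundary analysis gives ``$[\Phi_v]=0 \Rightarrow v=0$'', so $v\mapsto[\Phi_v]$ is a linear isomorphism $A \xrightarrow{\ \sim\ } \Homol^2_2(\ess\otimes A,\ess\otimes A)$, which is the assertion of the Corollary. (One should also remark that this is consistent with the general lower bound (\ref{eq-summand}): the summand $\Har^2(A,A)$ contributes exactly these classes via (\ref{eq-coc-3}) with $\omega=\mathrm{id}$, and the $\Homol^2(\ess,\ess)\otimes A$ summand lives in weights $-2$ and $2$ according to the basic cocycles (\ref{eq-coc-s})--(\ref{eq-coc-s-2}) of Lemma \ref{lemma-ess} — but sorting out exactly how those pieces distribute among the weight components is the business of the weight-$-2$ and weight-$0$ computations, not of this corollary.)
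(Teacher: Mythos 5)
Your overall strategy is exactly the paper's: specialize Lemma \ref{lemma-minus-2-ext} to $\xi=D=\Lambda=0$ (so that condition (\ref{eq-cond}) becomes the cocycle condition and the ``moreover'' clause yields the normal form (\ref{eq-cocycle})), and then prove that $v\mapsto[\Phi_v]$ is injective. The outline is sound, but two of your supporting claims are wrong as stated. First, $\Phi_v$ is \emph{not} obtained from the embedding (\ref{eq-coc-3}): a cocycle of that kind with $\alpha(a,b)=abv$ sends $(e\otimes a)\wedge(h\otimes b)$ to $[e,h]\otimes abv=e\otimes abv$ rather than $f\otimes abv$, has weight $0$ rather than $2$, and — since $(a,b)\mapsto abv$ is a Harrison (indeed Hochschild) coboundary — is itself a Lie $2$-coboundary; if your identification were correct it would contradict the injectivity you need. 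The correct identification, which is the one the paper uses for cohomological independence, is with (\ref{eq-coc-1}): $\Phi_v$ is $\varphi\otimes(\cdot\,v)$ with $\varphi$ the basic cocycle (\ref{eq-coc-s-2}) of $\Homol^2_2(\ess,\ess)$, and independence for linearly independent $v$ then follows from the direct-sum statement (\ref{eq-summand}) together with Lemma \ref{lemma-ess}(ii). (Your fallback of verifying (\ref{eq-fff})--(\ref{eq-two}) directly is fine and does establish that $\Phi_v$ is a cocycle.)

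Second, your description of the weight-$2$ coboundaries is off by one degree: a $1$-cochain sending $e\otimes A$ into $h\otimes A$ and $h\otimes A$ into $f\otimes A$ has weight $1$, not $2$. A weight-$2$ cochain $\Omega$ must send $e\otimes A$ into $f\otimes A$ and annihilate $h\otimes A$ and $f\otimes A$, so the argument you sketch only rules out $\Phi_v$ being the coboundary of a weight-$1$ cochain, which is beside the point. The fix is immediate: with $\Omega(e\otimes a)=f\otimes\mu(a)$ one computes $\dcobound\Omega(e\otimes a,h\otimes b)=f\otimes\bigl(\mu(ab)+b\mu(a)\bigr)$, and equating this with $f\otimes abv$ at $b=1$ gives $av=0$ for all $a$, hence $v=0$. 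Your closing parenthetical repeats the first error: the $\Har^2(A,A)$ summand accounts for the weight-$0$ classes (Lemma \ref{lemma-0-ext}), while the weight-$\pm2$ classes come from $\Homol^2(\ess,\ess)\otimes A$.
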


\begin{proof}
Apply Lemma \ref{lemma-minus-2-ext} with $\xi = 0$, $D=0$, and $\Lambda = 0$. 
It is straightforward to check that the cocycles of the form (\ref{eq-cocycle}) 
are cohomologically independent for linearly independent values of $v$. (This 
also follows from Lemma \ref{lemma-ess}(ii) and observation that the resulting 
map is a $2$-cocycle of kind (\ref{eq-coc-1}), where $\varphi$ coincides with 
(\ref{eq-coc-s-2}).)
\end{proof}

\begin{lemma}\label{lemma-0-ext}
$\Homol^2_0 (\ess \otimes A, \ess \otimes A) \simeq \Har^2(A,A)$.
The basic cocycles can be chosen as
\begin{equation}\label{eq-yoyo}
(x \otimes a ) \wedge (y \otimes b) \mapsto [x,y] \otimes \beta(a,b) ,
\end{equation}
where $x,y\in \ess$, $a,b \in A$, and $\beta$ is a Harrison $2$-cocycle on $A$.
\end{lemma}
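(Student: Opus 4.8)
The plan is to imitate the structure of the preceding lemmas: fix a cocycle $\Phi \in \C^2_0(\ess \otimes A, \ess \otimes A)$ of weight $0$, exploit the torus action to reduce to cocycles respecting the root space decomposition (\ref{eq-root-sp}), write $\Phi$ in terms of a handful of bilinear maps $A \times A \to A$, and then extract relations from the cocycle equation evaluated on the finitely many relevant types of basic triples. Since $\Phi$ has weight $0$ and respects the decomposition $\ess = Ke \oplus Kh \oplus Kf$, its only possibly nonzero components are
\begin{align*}
\Phi(e \otimes a, f \otimes b) &= h \otimes \alpha(a,b) \\
\Phi(e \otimes a, h \otimes b) &= e \otimes \gamma(a,b) \\
\Phi(h \otimes a, f \otimes b) &= f \otimes \delta(a,b) \\
\Phi(h \otimes a, h \otimes b) &= h \otimes \varepsilon(a,b)
\end{align*}
for bilinear maps $\alpha,\gamma,\delta,\varepsilon : A \times A \to K$ (here $\varepsilon$ alternating), together with $\Phi(e \otimes a, e \otimes b)$ and $\Phi(f \otimes a, f \otimes b)$, which land in $e \otimes A$ and $f \otimes A$ respectively — but a weight count shows the latter two have the wrong weight to be nonzero in the weight-$0$ part, so they vanish. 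I would then record the normalization afforded by coboundaries: a weight-$0$ cochain $\Omega : \ess \otimes A \to \ess \otimes A$ is built from linear maps $A \to A$ attached to $e$, $h$, $f$, and modifying $\Phi$ by $\dcobound\Omega$ lets me assume $\gamma(a,1) = \delta(a,1) = 0$ and that $\Phi(e \otimes 1, \cdot) = 0$ on the relevant arguments, pinning down several "boundary values''.

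Next I would grind the cocycle equation on the admissible triples: $(e\otimes a, h\otimes b, f\otimes c)$, $(e\otimes a, e\otimes b, f\otimes c)$-type combinations, $(h\otimes a, h\otimes b, f\otimes c)$, $(e\otimes a, h\otimes b, h\otimes c)$, and $(h\otimes a, h\otimes b, h\otimes c)$. Using (\ref{eq-mult}) to compute the brackets, each equation becomes a linear identity among $\alpha,\gamma,\delta,\varepsilon$ and their "contractions'' with multiplication in $A$. Substituting the normalizations ($\gamma(a,1) = 0$, etc.) and specializing a free variable to $1$ should successively express $\gamma$, $\delta$, $\varepsilon$ in terms of $\alpha$, and then force $\alpha$ to satisfy the symmetric Hochschild $2$-cocycle condition, i.e. $\alpha \in \Har^2(A,A)$ — exactly as in Lemma \ref{lemma-h2-triv-0}, where the analogous computation produced $\widehat{\HC^1}(A)$. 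The upshot is that $\Phi$ is cohomologous to the map (\ref{eq-yoyo}) with $\beta := \alpha$ a Harrison $2$-cocycle; that this map is genuinely a cocycle for any Harrison $2$-cocycle $\beta$ is (\ref{eq-coc-3}) with $L = \ess$, $\omega = \mathrm{id}$, already established in (\ref{eq-summand}).

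Finally, for the injectivity half — that distinct classes in $\Har^2(A,A)$ give distinct classes in $\Homol^2_0$ — I would check that if the cocycle (\ref{eq-yoyo}) is a coboundary $\dcobound\Omega$ for a weight-$0$ cochain $\Omega$, then, reading off the $e \otimes a \wedge f \otimes b$ component, $\beta(a,b)$ equals $\omega([e,f])$-type boundary, i.e. $\beta$ is a Hochschild $2$-coboundary; since $\beta$ is symmetric, Harrison $2$-coboundaries coincide with Hochschild $2$-coboundaries (recalled in the preliminaries), so $\beta = 0$ in $\Har^2(A,A)$. I expect the main obstacle to be purely bookkeeping: carefully tracking the several normalization steps and making sure the specialization arguments ($c = 1$, $a = 1$, etc.) are applied in an order that cleanly collapses $\gamma,\delta,\varepsilon$ onto $\alpha$ without circularity — the characteristic $2$ coalescing of the roots $\pm 1$ (noted in the remarks after Proposition \ref{prop}) means more components of $\Phi$ survive than in the classical case, so there are more identities to reconcile than one might first expect, but none of them is individually deep.
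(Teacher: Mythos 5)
Your proposal is correct and follows essentially the same route as the paper's proof: the same decomposition of a weight-zero cocycle into four bilinear maps $A \times A \to A$ (note the codomain is $A$, not $K$ as you wrote), the same normalization by a weight-zero coboundary, and the same specialization of the cocycle identities at $1$ to collapse all components onto a single symmetric Hochschild, i.e.\ Harrison, $2$-cocycle. The only differences are cosmetic: the paper lets the $(e\otimes a, h\otimes b)$-component carry the Harrison cocycle and absorbs its discrepancy with the $(e\otimes a, f\otimes b)$-component into an explicit coboundary, and it handles cohomological independence by appeal to the general splitting off of the summand (\ref{eq-summand}) rather than by your direct coboundary computation.
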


\begin{proof}
Let $\Phi$ be a 2-cocycle on $\ess\otimes A$ respecting the standard grading.
We may write
\begin{gather*}
\Phi(h \otimes a, h \otimes b) = h \otimes \alpha(a,b)       \\
\Phi(e \otimes a, h \otimes b) = e \otimes \beta(a,b)        \\
\Phi(f \otimes a, h \otimes b) = f \otimes \gamma(a,b)       \\
\Phi(e \otimes a, f \otimes b) = h \otimes \delta(a,b)       
\end{gather*}
for any $a,b \in A$, and some bilinear maps 
$\alpha, \beta, \gamma, \delta: A \times A \to A$. Due to the fact that $\Phi$ 
is alternating, $\alpha$ is alternating too.

Evaluating $\dcobound \Phi$ for the following 4 triples: 
\begin{gather*}
h \otimes a, h \otimes b, e \otimes c       \\
e \otimes a, e \otimes b, f \otimes c       \\
f \otimes a, f \otimes b, e \otimes c       \\
h \otimes a, e \otimes b, f \otimes c        
\end{gather*}
we get respectively:
\begin{gather}
\beta(ac,b) + \beta(bc,a) + b\beta(c,a) + a\beta(c,b) + c\alpha(a,b) = 0
\label{eq-1} \\
\beta(b,ac) + \beta(a,bc) + b\delta(a,c) + a\delta(b,c) = 0
\label{eq-3} \\
\gamma(b,ac) + \gamma(a,bc) + b\delta(c,a) + a\delta(c,b) = 0
\label{eq-4} \\
\delta(ab,c) + \delta(b,ac) + \alpha(bc,a) + c\beta(b,a) + b\gamma(c,a) = 0
\label{eq-5}
\end{gather}
for any $a,b,c \in A$.

Modifying the cocycle $\Phi$ by the coboundary $\dcobound\Omega$, where the 
cochain $\Omega: \ess \otimes A \to \ess \otimes A$ is defined as follows:
$\Omega(e\otimes A) = \Omega(f\otimes A) = 0$, and
$\Omega(h\otimes a) = h \otimes \beta(1,a)$, we may assume that
$\Phi(e \otimes 1, h \otimes a) = 0$, i.e., $\beta(1,a) = 0$.

Substituting $c=1$ in (\ref{eq-1}) and in (\ref{eq-3}), we get respectively:
\begin{gather}
\alpha(a,b) = \beta(a,b) + \beta(b,a) \label{eq-beta} \\
\beta(a,b) + \beta(b,a) = b\delta(a,1) + a\delta(b,1) \notag
\end{gather}
and, consequently,
\begin{equation}\label{eq-alpha}
\alpha(a,b) = b\delta(a,1) + a\delta(b,1)
\end{equation}
for any $a,b \in A$.

Similarly, substituting $c=1$ in (\ref{eq-4}), we get:
\begin{equation}\label{eq-gamma}
\gamma(a,b) + \gamma(b,a) = b\delta(1,a) + a\delta(1,b)
\end{equation}
for any $a,b \in A$.

Substituting $b=1$ in (\ref{eq-5}), we get:
\begin{equation}\label{eq-gamma-delta}
\delta(a,c) + \delta(1,ac) + \alpha(c,a) + \gamma(c,a) = 0
\end{equation}
for any $a,c \in A$.

Symmetrizing the last equality and using (\ref{eq-gamma}), we get:
$$
\delta(a,c) + \delta(c,a) + a\delta(1,c) + c\delta(1,a) = 0
$$
for any $a,c \in A$.

Setting here $a=1$ and $c=1$ respectively, we get 
$\delta(a,1) = \delta(1,a) = a\delta(1,1)$, and hence $\delta$ is symmetric. 
Together with (\ref{eq-alpha}) this implies $\alpha = 0$, and, according to 
(\ref{eq-beta}), that $\beta$ is symmetric. Moreover, (\ref{eq-gamma-delta}) now
yields 
$$
\gamma(a,b) = \delta(a,b) + ab\delta(1,1)
$$
for any $a,b \in A$, and hence $\gamma$ is symmetric.

Substituting now $b=1$ in (\ref{eq-3}), we get
$$
\beta(a,c) = \delta(a,c) + ac\delta(1,1)
$$
for any $a,c \in A$, and hence $\beta = \gamma$.

Hence, $\Phi$ may be written in the form (\ref{eq-yoyo}), plus the map sending 
$(e \otimes a) \wedge (f \otimes b)$ to $h \otimes ab\delta(1,1)$. The latter 
map is obviously equal to the coboundary $\dcobound \Omega$ for 
$\Omega (e \otimes a) = e \otimes a\delta(1,1)$ and 
$\Omega(f \otimes A) = \Omega(h \otimes A) = 0$.
\end{proof}

\noindent\emph{Conclusion of the proof of Proposition \ref{prop}}.
According to Lemma \ref{lemma-ess}(ii), 
$$
\Homol^2(\ess,\ess) = \Homol^2_{-2}(\ess,\ess) \oplus \Homol^2_2(\ess,\ess) ,
$$
with basic cocycles (\ref{eq-coc-s}) and (\ref{eq-coc-s-2}) of weight
$-2$ and $2$, respectively. This, coupled with Corollary \ref{cor-minus-2} 
(together with its negative grading counterpart), implies that 
$$
\Homol^2_{-2}(\ess \otimes A, \ess \otimes A) \oplus 
\Homol^2_2(\ess \otimes A, \ess \otimes A) \simeq 
\Homol^2(\ess,\ess) \otimes A .
$$
The rest follows from Lemma \ref{lemma-0-ext}.
\end {proof}

\section{
Second cohomology and deformations of $\ess \otimes A$ extended by derivations
}\label{sec-plus-d}

Now, having the low-degree cohomology of the current Lie algebra 
$\ess \otimes A$ at hand, we glue its components together, via the standard 
Hochschild--Serre spectral sequence, to compute the second positive cohomology 
of an extension of $\ess \otimes A$ of the form (\ref{eq-semisimple}), where 
both $\mathfrak D$ and $\mathfrak E$ are $1$-dimensional (in fact, $\mathfrak D$
is linearly spanned by $(\ad f)^2$), i.e., of Lie algebras of the form 
$$
\ess \otimes A + g \otimes U + KD ,
$$
where $D$ is a nonzero derivation of $A$, and $U$ is a $D$-invariant subspace of
$A$. According to Remark \ref{rem-lemma-ess}, part 3, those are exactly Lie 
algebras lying between $\ess \otimes A + KD$ and $W_1(2) \otimes A + KD$.

Following \S \ref{subseq-grad}, we consider the grading of such algebras of 
length $2$ (length $1$ in the case $U = 0$):
\begin{align}\label{eq-grad}
\begin{split}
\text{weight }-1 &: \quad e \otimes A \phantom{.}                  \\
\text{weight }\phantom{-1}0  &: \quad h \otimes A + KD \phantom{.} \\
\text{weight }\phantom{-1}1  &: \quad f \otimes A \phantom{.}      \\
\text{weight }\phantom{-1}2  &: \quad g \otimes U .
\end{split}
\end{align}

It is possible to compute the whole second cohomology, as well as to consider 
cases of more general algebras $\mathfrak D$ and $\mathfrak E$, but this will 
lead to numerous trivial but very cumbersome technicalities. Also, the resulting
formulas are not very aesthetically appealing. That is why we restrict ourselves
just to the case necessary for our present goal. The computations are similar 
to those performed in \cite[\S 5]{me-deformations}.

\begin{proposition}\label{prop-h2-d}
Assume $\dim A > 2$ and $U \ne 0$. Then
\begin{equation*}
\Homol^2_+(\ess \otimes A + g \otimes U + KD, \> \ess \otimes A + g \otimes U + KD)
\simeq A^D \oplus \frac{A}{D(A) + U} \oplus \Xi_{D,U} ,
\end{equation*}
where $\Xi_{D,U}$ consists of linear maps $\xi: A \to K$ vanishing on 
$A^{[2]}$, $D(A)$, and $U$, and satisfying the conditions
$$
\xi(a)D(b) + \xi(b)D(a) \in U
$$
and
\begin{multline}\label{eq-long}
\Big(\xi(a)b + \xi(b)a + \xi(ab)1\Big)D(c) +
\Big(\xi(c)a + \xi(a)c + \xi(ca)1\Big)D(b) +
\Big(\xi(b)c + \xi(c)b + \xi(bc)1\Big)D(a) \\ = 0
\end{multline}
for any $a,b,c \in A$.

The basic cocycles can be chosen as:
\begin{enumerate}[\upshape(i)]

\item \label{item-i} 
$$
(e \otimes a) \wedge (h \otimes b) \mapsto f \otimes abv
$$
for some $v\in A^D$;

\item \label{item-iii}
$$
(e \otimes a) \wedge D \mapsto f \otimes av
$$ 
for some $v \in A$;

\item \label{item-iv}
\begin{align*}
(e \otimes a) \wedge (e \otimes b) &\mapsto 
h \otimes \Big(\xi(a)D(b) + \xi(b)D(a)\Big) + \xi(ab)D
\\
(e \otimes a) \wedge (h \otimes b) &\mapsto f \otimes \xi(b)D(a)
\\
(e \otimes a) \wedge (f \otimes b) &\mapsto 
g \otimes \Big(\xi(a)D(b) + \xi(b)D(a)\Big) 
\\
(h \otimes a) \wedge (h \otimes b) &\mapsto 
g \otimes \Big(\xi(a)D(b) + \xi(b)D(a)\Big) ,
\end{align*}
where $\xi \in \Xi_{D,U}$.

\end{enumerate}
\end{proposition}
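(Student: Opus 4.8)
The plan is to use the Hochschild--Serre spectral sequence attached to the ideal $\ess \otimes A$ in $\mathfrak{g} = \ess \otimes A + g \otimes U + KD$, with quotient $\mathfrak{g}/(\ess\otimes A) \cong g \otimes U + KD$ (an abelian, or at least solvable, Lie algebra acting on the top of the grading). Restricting to positive weights throughout, the $E_2$-term has contributions $\Homol^p(g\otimes U + KD, \Homol^q(\ess\otimes A, \mathfrak{g}))$ for $p+q=2$. Since $g \otimes U + KD$ is low-dimensional we only need $p=0,1,2$, and the relevant coefficient modules $\Homol^q(\ess\otimes A, \ess \otimes A)$ for $q=0,1,2$ are already known: $\Homol^0 = 0$ by centrality, $\Homol^1 \cong (\Homol^1(\ess,\ess)\otimes A) \oplus \Der(A)$ by Proposition \ref{prop-h1}, and $\Homol^2 \cong (\Homol^2(\ess,\ess)\otimes A)\oplus \Har^2(A,A)$ by Proposition \ref{prop}. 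One also needs $\Homol^q(\ess\otimes A, g\otimes U + KD)$, i.e.\ cohomology of the current algebra with coefficients in the one-dimensional extension pieces, which by a weight argument (the module $g\otimes U$ sits in weight $2$, $KD$ in weight $0$) reduces to computations of the same flavor as in \S\ref{sec-h2-zass}--\ref{sec-h2-adj}. First I would lay out this $E_2$ page explicitly, keeping only the summands of positive weight.

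Next I would identify each surviving summand with a piece of the claimed answer. The $q=2$, $p=0$ corner contributes the invariants $\Homol^2_+(\ess\otimes A, \ess\otimes A)^{g\otimes U + KD}$; from Corollary \ref{cor-minus-2} the weight-$2$ part is $A$ with cocycle $(e\otimes a)\wedge(h\otimes b)\mapsto f\otimes abv$, and the invariance condition under $D$ cuts $v$ down to $A^D$ --- this yields type (\ref{item-i}) and the summand $A^D$. The $q=1$ contributions, paired against $\Homol^1(g\otimes U+KD,-)$ in the $p=1$ column, should after the differential produce the cocycle $(e\otimes a)\wedge D \mapsto f\otimes av$ of type (\ref{item-iii}); here the constraint is that $v$ ranges over $A$ modulo the image of the relevant $d_2$ differential, and the bookkeeping of exactly which $v$ survive --- together with the $\Der(A)=\Har^1$ piece interacting with $U$ --- is what produces the quotient $A/(D(A)+U)$. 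Finally, the genuinely new phenomenon is the summand $\Xi_{D,U}$: these are glued-together cocycles that are nonzero on $e\otimes A \wedge e\otimes A$, $e\otimes A\wedge h\otimes A$, $e\otimes A\wedge f\otimes A$, and $h\otimes A\wedge h\otimes A$ simultaneously, tying the weight-$2$ trivial-coefficient cocycle of Corollary \ref{cor-h2-triv-+} (the $\xi$ with $\xi(A^{[2]})=0$) to the adjoint-coefficient pieces via the derivation $D$. To pin these down I would invoke Lemma \ref{lemma-minus-2-ext} directly: that lemma already shows that a $2$-cochain $\Phi$ with the prescribed coboundary shape exists precisely when (\ref{eq-yoyo3}) and (\ref{eq-yoyo5}) hold, and its "moreover" clause gives exactly the $h\otimes(\xi(a)D(b)+\xi(b)D(a))$ and $f\otimes(\ldots)$ shape appearing in (\ref{item-iv}); the extra constraints $\xi(D(A))=0$, $\xi(U)=0$, and $\xi(a)D(b)+\xi(b)D(a)\in U$ arise from requiring the cochain to extend over $D$ and over $g\otimes U$ (closing the cocycle equation on triples involving $D$ and $g\otimes u$), and condition (\ref{eq-long}) is just (\ref{eq-yoyo5}) transcribed.

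The key steps, in order: (1) set up Hochschild--Serre for $\ess\otimes A \trianglelefteq \mathfrak{g}$ and write down the positive-weight part of $E_2$, using Propositions \ref{prop-h1} and \ref{prop} for the coefficient cohomology; (2) compute the two differentials $d_2$ into and out of the relevant spots and take invariants/coinvariants, getting the summands $A^D$ and $A/(D(A)+U)$ with cocycles (\ref{item-i}) and (\ref{item-iii}); (3) treat the "mixed" part by explicit cochain analysis, extending the Lemma \ref{lemma-minus-2-ext} cochain over the derivations $D$ and $g\otimes U$ and reading off the constraints defining $\Xi_{D,U}$ and the cocycle (\ref{item-iv}); (4) verify cohomological independence of the three families --- they live in distinguishable weight/coefficient components (weight $2$ adjoint, weight via $D$, and the mixed trivial-plus-adjoint gluing), and the coboundaries of weight $>0$ are spanned by $\Omega$'s whose effect we can list, so linear independence of parameters $v$, $v$, $\xi$ gives cohomological independence; (5) assemble. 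I expect step (3) --- the honest extension of the Lemma \ref{lemma-minus-2-ext} cochain across the full extended algebra, and isolating precisely the conditions $\xi(A^{[2]})=\xi(D(A))=\xi(U)=0$ together with $\xi(a)D(b)+\xi(b)D(a)\in U$ and (\ref{eq-long}) --- to be the main obstacle: it is where the interaction of the three cohomology theories ($(A/A^{[2]})^*$, $\Der(A)$, $\Har^2$) genuinely shows up, and the spectral sequence only organizes it rather than doing the work, so a certain amount of direct cocycle manipulation on triples like $(e\otimes a)\wedge(e\otimes b)\wedge D$, $(e\otimes a)\wedge(h\otimes b)\wedge D$, $(e\otimes a)\wedge(e\otimes b)\wedge(f\otimes c)$, and $(e\otimes a)\wedge(h\otimes b)\wedge(h\otimes c)$ with the $g\otimes U$ terms thrown in is unavoidable. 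The hypotheses $\dim A > 2$ and $U\neq 0$ should enter exactly to rule out a few low-dimensional degeneracies in this last analysis and to guarantee that the $d_2$'s have the expected ranks.
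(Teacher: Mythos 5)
Your plan follows essentially the same route as the paper: the Hochschild--Serre spectral sequence relative to the ideal $\ess\otimes A$ (with Propositions \ref{prop-h1} and \ref{prop} supplying the coefficient modules), the quotient $A/(D(A)+U)$ arising from the $\E^{11}$ term, the $D$- and $g\otimes U$-invariants of $\Homol^2_+(\ess\otimes A,\ess\otimes A)$ giving $A^D$, and the mixed cocycles pinned down by Lemma \ref{lemma-minus-2-ext} together with the invariance conditions yielding $\xi(D(A))=\xi(U)=0$ and $\xi(a)D(b)+\xi(b)D(a)\in U$. The only organizational detail you leave implicit is that the paper packages your ``gluing'' step as the kernel of an explicit connecting homomorphism $F\colon \Homol^2(\ess\otimes A,K)\otimes(U+KD)\to\Homol^3(\ess\otimes A,\ess\otimes A)$ inside the $\E^{02}_2$ term (via \cite[Lemma 5.1]{me-deformations}), with the weight-$4$ part of $\Ker F$ killed by the hypothesis $\dim A>2$ exactly as you anticipate.
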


The case $U=0$ is similar, if not slightly simpler.

\begin{proposition}\label{prop-u0}
\begin{equation*}
\Homol^2_+(\ess \otimes A + KD, \> \ess \otimes A + KD)
\simeq A^D \oplus \frac{A}{D(A)} \oplus \Xi_{D,0} .
\end{equation*}

The basic cocycles can be chosen as follows: types 
{\rm (\ref{item-i})} and {\rm (\ref{item-iii})} as in 
Proposition \ref{prop-h2-d}, and
\begin{enumerate}
\item[{\rm (iii)}]
\begin{align*}
(e \otimes a) \wedge (e \otimes b) &\mapsto \xi(ab)D             \\
(e \otimes a) \wedge (h \otimes b) &\mapsto f \otimes \xi(b)D(a) ,
\end{align*}
where $\xi \in \Xi_{D,0}$.
\end{enumerate}
\end{proposition}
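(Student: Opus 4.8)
The plan is to run essentially the same machinery as in the proof of Proposition~\ref{prop-h2-d}, but with $U=0$, so that the weight-$2$ component in the grading~(\ref{eq-grad}) disappears and the algebra in question is just $L = \ess \otimes A + KD$ with the length-$1$ grading. First I would set up the Hochschild--Serre spectral sequence for the ideal $\ess \otimes A$ in $L$, with quotient $KD$; since $KD$ is one-dimensional, the relevant contributions are $\Homol^2_+(\ess \otimes A, \ess \otimes A)^{D}$, the $D$-invariants of $\Homol^2_+$ of the current algebra computed in Section~\ref{sec-h2-adj} and Section~\ref{sec-h2-zass}, together with a piece coming from $\Homol^1$ of $KD$ with coefficients in $\Homol^1(\ess\otimes A,\ess\otimes A)$, plus the differentials between them. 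The positive part $\Homol^2_+(\ess\otimes A,\ess\otimes A)$ is, by Proposition~\ref{prop} together with Corollary~\ref{cor-minus-2}, isomorphic to $A$ (weight $2$, cocycles of type~(\ref{eq-cocycle})); taking $D$-invariants of that summand and pairing it against the weight-$1$ derivation-type cochains $(e\otimes a)\wedge D \mapsto f\otimes av$ produces the summands $A^D$ (type~(\ref{item-i})) and $A/D(A)$ (type~(\ref{item-iii})) in the statement.

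Next I would handle the genuinely new part, the family parametrized by $\Xi_{D,0}$. This is where Lemma~\ref{lemma-minus-2-ext} does the heavy lifting: an arbitrary positive-weight $2$-cocycle on $L$ restricts to a cocycle on $\ess\otimes A$, and weights $1$ and $2$ must be analyzed. For weight $2$ we already know the answer ($A^D$ again, folded into type~(\ref{item-i})); for weight $1$, a cocycle $\Phi$ has components $\Phi(e\otimes a,e\otimes b)$ (valued in $h\otimes A + KD$), $\Phi(e\otimes a, h\otimes b)$ (valued in $f\otimes A$), and $\Phi(e\otimes a, D)$, etc. Writing $\Phi(e\otimes a,e\otimes b) = h\otimes\beta(a,b) + \xi(a,b)D$ and running the cocycle equations on triples from $e\otimes A$, $h\otimes A$, $f\otimes A$, and $D$, exactly as in Lemma~\ref{lemma-minus-2-ext} (now with $\Lambda = 0$ forced because there is no $g\otimes U$ to receive it, and with the extra $D$-coordinate $\xi$ playing the role the $g$-component played before), one is reduced to: $\beta(a,b) = \xi(a)D(b)+\xi(b)D(a)$ for a linear map $\xi\colon A\to K$, together with the constraint that $\xi$ vanish on $A^{[2]}$ (from alternativity of the $\ess$-part, as in Lemma~\ref{lem-0}), the constraint $\xi(D(A)) = 0$ (from normalizing away a coboundary in the $D$-direction), and the cubic identity~(\ref{eq-long}) (which is precisely~(\ref{eq-yoyo5}) of Lemma~\ref{lemma-minus-2-ext}). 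That gives the defining conditions of $\Xi_{D,0}$ and the cocycle formulas of type~(iii).

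Then I would verify cohomological independence and completeness: the three families live in distinct bidegrees of the spectral sequence (weight $2$ with/without the $D$-leg versus weight $1$), so no nontrivial linear combination can be a coboundary unless each piece is; and a coboundary $\dcobound\Omega$ of weight $1$ cannot produce the $h\otimes(\xi(a)D(b)+\xi(b)D(a))$ term nontrivially once $\xi$ is normalized on $D(A)$, so the $\Xi_{D,0}$-classes are genuinely new. The differentials in the spectral sequence all vanish for degree reasons (there is essentially only one nonzero column beyond the base, and the transgression lands in a space we control), which is why the answer is a plain direct sum with no correction terms.

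The main obstacle I expect is bookkeeping rather than conceptual: faithfully carrying the $D$-coordinate through the system of cocycle equations, since the presence of $KD$ in the zero-weight component means every triple involving $D$ contributes an extra equation, and one must check that these are all consequences of~(\ref{eq-long}) and the vanishing conditions — i.e., that no further constraint on $\xi$ sneaks in and no further free parameter survives. Concretely, the delicate point is the interplay between the normalization $\beta(1,a)=0$ (removing a coboundary), the forced vanishing $\xi(D(A))=0$, and the identity~(\ref{eq-long}) when specialized at $1$; one must confirm these are mutually consistent and jointly exhaustive, exactly as the substitutions $c=1$, $a=1$, $b=1$ in the proof of Lemma~\ref{lemma-minus-2-ext} did in the $U\neq 0$ case. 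Since that lemma was deliberately stated in the slightly more general form with $D(1)=0$ and $\Lambda(1,A)=0$, most of this reduces to invoking it with $\Lambda = 0$ and reading off the conclusion, so the proof should be short — "similar, if not slightly simpler" than Proposition~\ref{prop-h2-d}, as the text already advertises.
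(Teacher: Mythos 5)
Your proposal matches the paper's argument essentially verbatim: the paper's proof of Proposition \ref{prop-u0} is precisely a rerun of the Hochschild--Serre computation of Proposition \ref{prop-h2-d} with $U=0$, extracting $A/D(A)$ from the $\E^{11}$ term, $A^D$ from the $D$-invariants of $\Homol^2_2(\ess\otimes A,\ess\otimes A)$, and $\Xi_{D,0}$ from $(\Ker F)^D$ via Lemma \ref{lemma-minus-2-ext} specialized to $\Lambda=0$ (which forces $\xi(a)D(b)+\xi(b)D(a)=0$ and kills the $h$-valued component), exactly as you outline. One small correction: the cochains you label ``weight $1$'', with components $(e\otimes a)\wedge(e\otimes b)\mapsto h\otimes A+KD$ and $(e\otimes a)\wedge(h\otimes b)\mapsto f\otimes A$, in fact have weight $2$ in the paper's convention --- all of $\Homol^2_+$ here is concentrated in weight $2$, the odd weights being excluded by invariance under the torus $K(h\otimes 1)$ --- but the components and equations you analyze are the correct ones, so this is only a labeling slip.
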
 

\begin{remark}
By definition, $\Xi_{D,0}$ consists of linear maps $\xi: A \to K$ vanishing on 
$A^{[2]}$ and $D(A)$, and such that $\xi(a)D(b) + \xi(b)D(a) = 0$ for any 
$a,b \in A$. 
\end{remark}

\begin{proof}[Proof of Proposition \ref{prop-h2-d}]
By \cite[Proposition 1.1(i)]{me-deformations}, 
$\Z(\ess \otimes A) \simeq \Z(\ess) \otimes A = 0$ (where $\Z(L)$ denotes the 
center of a Lie algebra $L$), so $\ess \otimes A$ is centerless and, obviously, 
perfect. Further, $g \otimes U + KD$ consists, obviously, of outer derivations 
of the algebra $\ess \otimes A$ 
(cf. \cite[Proposition 1.1(ii)]{me-deformations}), so 
\cite[Lemma 5.1]{me-deformations} is applicable. By this Lemma, the relevant
terms in the Hochschild--Serre spectral sequence abutting to the second
cohomology in question, relative to the ideal $\ess \otimes A$, are:
\begin{align}
\E^{20}_\infty &= 0           \notag
\\
\E^{11}_\infty = \E^{11}_2 &\simeq \Homol^1\Big(g \otimes U + KD, \> 
\frac{\Homol^1(\ess \otimes A, \ess \otimes A)}{g \otimes U + KD}\Big)  
\label{eq-e11} \\
\E^{21}_2 &\simeq \Homol^2\Big(g \otimes U + KD, \>
\frac{\Homol^1(\ess \otimes A, \ess \otimes A)}{g \otimes U + KD}\Big)  
\label{eq-e21} \\
\E^{02}_2 &\simeq \Homol^2(\ess \otimes A, \ess \otimes A)^{(g \otimes U + KD)} 
\oplus (\Ker F)^{(g \otimes U + KD)} \>
\notag \\
\E^{02}_\infty = \E^{02}_3 &= \Ker \dcobound^{02}_2 . \notag
\end{align}
(The formula for the $\E^{21}_2$ term is not specified in 
\cite[Lemma 5.1]{me-deformations}, but it is implicit in its proof, and follows 
from the same standard homological considerations as for the $\E^{11}_2$ term.)
The linear map 
$$
F: \Homol^2(\ess \otimes A,K) \otimes (U + KD) \to 
\Homol^3(\ess \otimes A, \ess \otimes A)
$$
sends the elements $[\Phi] \otimes u$ and $[\Phi] \otimes D$, where $[\Phi]$ is 
the class of a $2$-cocycle $\Phi$, and $u\in U$, to the class of a $3$-cocycle 
defined, respectively, as
\begin{equation*}
(x \otimes a) \wedge (y \otimes b) \wedge (z \otimes c) \mapsto 
\Phi(x \otimes a,y \otimes b) [z,g] \otimes cu + 
\Phi(z \otimes c,x \otimes a) [y,g] \otimes bu + 
\Phi(y \otimes b,z \otimes c) [x,g] \otimes au
\end{equation*}
and
\begin{equation*}
(x \otimes a) \wedge (y \otimes b) \wedge (z \otimes c) \mapsto 
\Phi(x \otimes a,y \otimes b) z \otimes D(c) + 
\Phi(z \otimes c,x \otimes a) y \otimes D(b) + 
\Phi(y \otimes b,z \otimes c) x \otimes D(a)
\end{equation*}
for any $x,y,z\in \ess$, $a,b,c\in A$ (this map arises as a connection 
homomorphism in the appropriate cohomological long exact sequence, see proof of 
\cite[Lemma 5.1]{me-deformations} for details). 

Assuming that the element 
\begin{equation}\label{eq-elem}
\sum_i [\Phi_i] \otimes u_i + [\Phi_D] \otimes D
\end{equation}
lies in $\Ker F$, the corresponding element of $\E^{02}_2$ is generated by a
cochain of the form $\sum_i \Phi_i^\prime + \Phi_D^\prime + \widetilde{\Phi}$, 
where $\Phi_i^\prime, \Phi_D^\prime$ are $(g \otimes U + KD)$-valued 
$2$-cochains built from $\Phi_i, \Phi_D$:
\begin{align*}
\Phi_i^\prime (x \otimes a, y \otimes b) &= 
\Phi_i(x \otimes a, y \otimes b) g \otimes u_i \\
\Phi_D^\prime (x \otimes a, y \otimes b) &= 
\Phi_D(x \otimes a, y \otimes b) D ,
\end{align*}
and $\widetilde{\Phi}$ is a $2$-cochain on $\ess \otimes A$ such that 
$F\Phi = \dcobound \widetilde{\Phi}$. Note that when choosing the basic cocycles
induced by this part, we should exclude those corresponding to the case
$F\Phi = 0$, as such cocycles are already accounted for by the 
$\Homol^2(\ess \otimes A, \ess \otimes A)^{(g \otimes U + KD)}$ summand of 
$\E^{02}_2$.

The action of $D$ and of $g \otimes U$ on these $2$-cochains is determined,
respectively, by:
\begin{align}\label{eq-gu-act}
\begin{split}
(D\widetilde{\Phi})(x \otimes a, y \otimes b) &= 
\widetilde{\Phi}(x \otimes D(a), y \otimes b) + 
\widetilde{\Phi}(x \otimes a, y \otimes D(b)) +
D(\widetilde{\Phi}(x \otimes a, y \otimes b))
\\
(D\Phi_i^\prime)(x \otimes a, y \otimes b) &\\
= g& \otimes \Big(
\Big(\Phi_i(x \otimes D(a), y \otimes b) + \Phi_i(x \otimes a, y \otimes D(b))\Big) u_i 
+ \Phi_i(x \otimes a, y \otimes b) D(u_i) \Big)
\\
(D\Phi_D^\prime)(x \otimes a, y \otimes b) &= 
\Big(\Phi_D(x \otimes D(a), y \otimes b) + \Phi_D(x \otimes a, y \otimes D(b))\Big)D
\end{split}
\end{align}
and
\begin{align}\label{eq-d-act}
\begin{split}
((g \otimes u)\widetilde{\Phi})(x \otimes a, y \otimes b) &= 
\widetilde{\Phi}([x,g] \otimes au, y \otimes b) + 
\widetilde{\Phi}(x \otimes a, [y,g] \otimes bu) +
[\widetilde{\Phi}(x \otimes a, y \otimes b), g \otimes u]
\\
((g \otimes u)\Phi_i^\prime)(x \otimes a, y \otimes b) &= g \otimes
\Big(\Phi_i([x,g] \otimes au, y \otimes b) + \Phi_i(x \otimes a, [y,g] \otimes bu)\Big)
u_i
\\
((g \otimes u)\Phi_D^\prime)(x \otimes a, y \otimes b) &= 
\Phi_D (x \otimes a, y \otimes b) g \otimes D(u) \\&+
\Big(\Phi_D([x,g] \otimes au, y \otimes b) + \Phi_D(x \otimes a, [y,g] \otimes bu)\Big) D
\end{split}
\end{align}
for any $x,y\in \ess$, $a,b\in A$.

The grading (\ref{eq-grad}) of the underlying Lie algebra induces a 
$\mathbb Z$-grading on each term of the spectral sequence (see, for example, 
\cite[p.~44 of the English edition]{fuchs}), and we will concentrate on the 
positive part which abuts to the second positive cohomology in question.

By Proposition \ref{prop} (for $n=1$), the module in 
(\ref{eq-e11}) and (\ref{eq-e21}) is isomorphic to 
\begin{equation}\label{eq-mod}
\frac{\Homol^1(\ess,\ess) \otimes A}{g \otimes U} \oplus \frac{\Der(A)}{KD} ,
\end{equation}
and $g \otimes U$ acts on it trivially. 

The algebra $g \otimes U + KD$ is 
$\mathbb Z$-graded with components of weight $0$ and $2$, and its module 
(\ref{eq-mod}) is $\mathbb Z$-graded with components $-2$, $0$ and $2$. Hence, 
in degree $1$, the corresponding cochain complex has components of weight 
$-4$, $-2$, $0$, $2$, and in degree $2$ it has components of weight 
$-6$, $-4$, $-2$, $0$, $2$. 

In degree $2$, the positive part of the second degree cohomology in 
(\ref{eq-e21}) is generated by the cochains mapping a pair of elements of weight
$0$ to elements of weight $2$, but since the zero component, $KD$, is 
$1$-dimensional, this positive cohomology vanishes. Hence, the positive part
of our Hochschild--Serre spectral sequence stabilizes at the $\E_2$ term, and
$$
\Homol^2_+(\ess \otimes A + g \otimes U + KD, \> 
           \ess \otimes A + g \otimes U + KD) 
\simeq (\E^{11}_2)_+ \oplus (\E^{02}_2)_+ .
$$

In degree $1$, elementary reasonings based on the Hochschild--Serre spectral 
sequence abutting to the first degree cohomology in (\ref{eq-e11}) with respect
to ideal $g \otimes U$, yield
\begin{equation*}
\E^{11}_2 \simeq 
\frac{\Homol^1(\ess,\ess) \otimes A}{\Homol^1(\ess,\ess) \otimes D(A) + g \otimes U}
\oplus \frac{\Der(A)}{[D,\Der(A)] + KD} \oplus
\Hom\Big(g \otimes U, 
\frac{\Homol^1(\ess,\ess) \otimes A}{g \otimes U} \oplus \frac{\Der(A)}{KD}\Big)^D .
\end{equation*}
The positive component here, of weight $2$, is
$$
(\E^{11}_2)_+ \simeq 
\Homol^1_+(\ess,\ess) \otimes \frac{A}{D(A) + U} \simeq \frac{A}{D(A) + U} ,
$$
with the corresponding basic cocycles of type (\ref{item-iii}).

Using Proposition \ref{prop} (for $n=2$), we infer that the first summand in the
$\E^{02}_2$ term is isomorphic to 
\begin{equation}\label{eq-summand1}
\Big(\Big(\Homol^2(\ess,\ess) \otimes A^D\Big) \oplus \Har^2(A,A)^D\Big)^{g \otimes U} .
\end{equation}
The corresponding $2$-cocycles on $\ess \otimes A + g \otimes U + D$ are 
constructed as the maps $(\ess \otimes A) \wedge (\ess \otimes A) \to \ess \otimes A$, and the 
positive part of (\ref{eq-summand1}) is 
\begin{equation}\label{eq-yoyo2}
\Big(\Homol^2_+(\ess,\ess) \otimes A^D\Big)^{g \otimes U} \simeq
\Homol^2_+(\ess,\ess)^g \otimes A^D +
\Homol^2_+(\ess,\ess) \otimes (A^D \cap \set{a\in A}{aU = 0}) .
\end{equation}
It is readily verified that the only basic cocycle (\ref{eq-coc-s-2}) in 
$\Homol^2_+(\ess,\ess)$ is $g$-invariant, so (\ref{eq-yoyo2}) is isomorphic to 
$A^D$. The corresponding basic cocycles are of type (\ref{item-i}).

The $(\Ker F)^{(g \otimes U + KD)}$ part of $\E^{02}_2$, being a subspace of the
tensor product of $\mathbb Z$-graded spaces \linebreak $\Homol^2(\ess \otimes A,K)$ (with 
weights $-2$, $0$, $2$, see \S \ref{sec-h2-zass}), and $g \otimes U + KD$ (with 
weights $0$ and $2$), is $\mathbb Z$-graded itself, with graded components of 
weight $-2$, $0$, $2$, $4$, which can be evaluated separately. As we are 
interested in the positive part, we deal with weights $2$ and $4$ only.

A general element $\Phi$ of $\Ker F$ of the form (\ref{eq-elem}) (where $u_i$'s 
may be assumed to be linearly independent) is of weight $2$, if the cocycles 
$\Phi_i$ are of weight $0$, and $\Phi_D$ is of weight $2$. According to 
Lemma \ref{lemma-h2-triv-0} and Corollary \ref{cor-h2-triv-+}, we have
\begin{align*}
\Phi_i(e \otimes a, f \otimes b) &= \alpha_i(a,b)  \\
\Phi_i(h \otimes a, h \otimes b) &= \alpha_i(a,b)  \\
\Phi_D(e \otimes a, e \otimes b) &= \xi(ab)
\end{align*}
for any $a,b \in A$, and some $\alpha_i \in \widehat{\HC^1}(A)$ and a linear 
map $\xi: A \to K$ vanishing on $A^{[2]}$. Then $F\Phi$ has the following form:
\begin{align*}
F\Phi(e\otimes a, e\otimes b, e\otimes c) &= e \otimes 
       \Big(\xi(ab)D(c) + \xi(ca)D(b) + \xi(bc)D(a)\Big)            \\
F\Phi(e\otimes a, e\otimes b, h\otimes c) &= h \otimes \xi(ab)D(c)  \\
F\Phi(e\otimes a, e\otimes b, f\otimes c) &= f \otimes \Big(
       \xi(ab)D(c) + \sum_i \Big(a\alpha_i(b,c) + b\alpha_i(a,c) \Big)u_i \Big) 
\\
F\Phi(e\otimes a, h\otimes b, h\otimes c) &= f \otimes a\sum_i \alpha_i(b,c)u_i .
\end{align*}
for any $a,b,c \in A$.

Denoting $\Lambda(a,b) = \sum_i \alpha_i(a,b) u_i$, we are in situation of 
Lemma \ref{lemma-minus-2-ext}. That Lemma tells the necessary and sufficient 
conditions for $F\Phi$ to vanish in $\Homol^3(\ess \otimes A,\ess \otimes A)$, 
i.e., to be a $3$-coboundary (of weight $2$). In particular, 
\begin{equation}\label{eq-lambda}
\Lambda(a,b) = \xi(a)D(b) + \xi(b)D(a)
\end{equation}
belongs to $U$ for any $a,b\in A$.
From the same Lemma it follows that $\widetilde{\Phi}$ has the form 
\begin{align*}
\widetilde{\Phi}(e \otimes a, e \otimes b) &= h \otimes 
\Big(\xi(a)D(b) + \xi(b)D(a)\Big)
\\
\widetilde{\Phi}(e \otimes a, h \otimes b) &= 
f \otimes \Big(abv + \xi(b)D(a)\Big)
\end{align*}
for some $v\in A$. 

The condition of invariance of 
$\sum_i \Phi_i^\prime + \Phi_D^\prime + \widetilde{\Phi}$ with respect to the 
$D$- and $g \otimes U$-actions (\ref{eq-d-act}) and (\ref{eq-gu-act}) yields, 
respectively, $\xi(D(A)) = 0$ and $D(v) = 0$, and
$$
\Big(\xi(b)u + \xi(bu)1\Big)D(a) + \Big(\xi(a)u + \xi(au)1\Big)D(b) + 
\Big(\xi(a)b + \xi(b)a + \xi(ab)1\Big)D(u) = 0
$$
for any $a,b\in A$, $u \in U$. Coupled with (\ref{eq-yoyo5}), and the fact that
$D$ is nonzero, the latter equality is equivalent to $\xi(U) = 0$. Excluding the
cocycles corresponding to the case $F\Phi = 0$, i.e., where $\xi = 0$ and $D=0$
(those are exactly cocycles of type (\ref{item-i})), we arrive at basic cocycles
of type (\ref{item-iv}).

A general element $\Phi$ of $\Ker F$ of the form (\ref{eq-elem}) is of 
weight $4$, if $\Phi_i$ are of weight $2$, and $\Phi_D = 0$. According to 
Corollary \ref{cor-h2-triv-+}, we have:
$$
\Phi_i(e \otimes a, e \otimes b) = \xi_i(ab)
$$
for any $a,b \in A$, where $\xi_i: A \to K$ are linear maps vanishing on 
$A^{[2]}$. Then $F\Phi$ has the following form:
$$
F\Phi(e \otimes a, e \otimes b, e \otimes c) = f \otimes 
\sum_i \Big(\xi_i(ab)c + \xi_i(ca)b + \xi_i(bc)a\Big)u_i .
$$
If $F\Phi = \dcobound \widetilde{\Phi}$ for some $2$-cochain 
$\widetilde{\Phi} \in \C^2(\ess \otimes A, \ess \otimes A)$, then the latter 
cochain should also have weight $4$, and hence vanishes. Thus $F\Phi = 0$, and 
the linear independence of $u_i$'s implies
$$
\xi_i(ab)c + \xi_i(ca)b + \xi_i(bc)a = 0
$$
for any $i$ and $a,b,c\in A$. Taking in the last equality $a,b,c$ to be linearly
independent, we get $\xi_i(ab) = 0$ for any two linearly independent $a,b\in A$.
Coupled with the condition $\xi_i(A^{[2]}) = 0$, this implies $\xi_i = 0$.
\end{proof}

\begin{proof}[Proof of Proposition \ref{prop-u0}]
All the reasonings above are valid verbatim (and are somewhat simplified) in 
the case $U=0$, except the part concerned with evaluation of $(\Ker F)^D$ which
leads to cocycles of type (\ref{item-iv}) as in Proposition \ref{prop-h2-d}. In that 
case $\Lambda = 0$, so (\ref{eq-lambda}) implies $\xi(a)D(b) + \xi(b)D(a) = 0$, from
which the condition (\ref{eq-long}) follows, and we arrive at the cocycles of
type (\ref{item-iv}) as in Proposition \ref{prop-u0}.
\end{proof}

\begin{proposition}\label{prop-deform}
If $\dim A > 2$ and $U \ne 0$, any filtered deformation of the algebra 
$\ess \otimes A + g \otimes U + KD$ is isomorphic to a Lie algebra with the 
following bracket (assuming $a,b$ are arbitrary elements of $A$, and $u,t$ are
arbitrary elements of $U$):
\begin{alignat*}{2}
&\{e \otimes a, e \otimes b \} \>\>&=&\>\> 
h \otimes \Big(\xi(a)D(b) + \xi(b)D(a)\Big) + g \otimes \lambda(ab) + \xi(ab)D
\\
&\{e \otimes a, h \otimes b \} \>\>&=&\>\>
e \otimes ab + f \otimes \Big(abv + \xi(b)D(a)\Big)
\\
&\{e \otimes a, f \otimes b \} \>\>&=&\>\>
h \otimes ab + g \otimes \Big(\xi(a)D(b) + \xi(b)D(a)\Big)
\\
&\{e \otimes a, g \otimes u \} \>\>&=&\>\> f \otimes au
\\
&\{h \otimes a, h \otimes b \} \>\>&=&\>\> 
g \otimes \Big(\xi(a)D(b) + \xi(b)D(a)\Big)          
\\
&\{h \otimes a, f \otimes b \} \>\>&=&\>\> f \otimes ab                 
\\
&\{h \otimes a, g \otimes u \} \>\>&=&\>\> 0
\\
&\{f \otimes a, f \otimes b \} \>\>&=&\>\> 0
\\
&\{f \otimes a, g \otimes u \} \>\>&=&\>\> 0
\\
&\{g \otimes u, g \otimes t \} \>\>&=&\>\> 0
\\
&\{e \otimes a, D \} \>\>&=&\>\> e \otimes D(a) + f \otimes aw          
\\
&\{h \otimes a, D \} \>\>&=&\>\> h \otimes D(a)                         
\\
&\{f \otimes a, D \} \>\>&=&\>\> f \otimes D(a) 
\\
&\{g \otimes u, D \} \>\>&=&\>\> g \otimes D(u) 
\end{alignat*}
for some $v,w \in A$ such that $D(v) = 0$, $\xi \in \Xi_{D,U}$, and a linear map
$\lambda: A \to U$ such that $\lambda(A^{[2]}) = 0$, and
\begin{multline}\label{eq-lambda1}
\Big(\xi(ab)D(c) + \xi(ca)D(b) + \xi(bc)D(a)\Big)v + 
\Big(\xi(ab)c + \xi(ca)b + \xi(bc)a\Big)w                   \\
= c\lambda(ab) + b\lambda(ca) + a\lambda(bc) 
\end{multline}
\begin{equation}   \label{eq-lambda2} 
\xi(a)D(w) + \xi(w)D(a) = \lambda(D(a)) + D(\lambda(a)) .
\end{equation}
for any $a,b,c\in A$. 
\end{proposition}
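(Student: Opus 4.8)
The plan is to combine the cohomological results of Proposition~\ref{prop-h2-d} with Gerstenhaber's deformation machinery, as set up in the ``Deformations'' subsection. By that subsection, a filtered deformation of the graded algebra $\ess \otimes A + g \otimes U + KD$ is obtained by starting from an infinitesimal deformation, i.e.\ a class in $\Homol^2_+(\ess \otimes A + g \otimes U + KD, \ess \otimes A + g \otimes U + KD)$, and prolonging it step by step, the obstruction at each step lying in $\Homol^3_+$ (via the Massey brackets). So the first step is to write down the most general infinitesimal deformation as a linear combination of the three families of basic cocycles listed in Proposition~\ref{prop-h2-d}: the type (\ref{item-i}) cocycle with parameter $v \in A^D$, the type (\ref{item-iii}) cocycle with parameter $w \in A$ (I will rename its parameter $w$ to match the statement), and the type (\ref{item-iv}) cocycle with parameter $\xi \in \Xi_{D,U}$. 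Adding these to the original bracket of the algebra (\ref{eq-grad}) and collecting terms produces exactly the candidate bracket displayed in the proposition, except that $\lambda$ has not yet appeared and the two constraints (\ref{eq-lambda1})--(\ref{eq-lambda2}) are not yet imposed.

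Next I would explain where the extra data $\lambda: A \to U$ and the constraints come from. The parameter $\lambda$ is \emph{not} an infinitesimal deformation but arises as a second-order correction term: when prolonging the infinitesimal deformation, the first obstruction is the Massey square of the type (\ref{item-iv}) cocycle. One computes $[[\varphi,\varphi]]$ for $\varphi$ the type (\ref{item-iv}) cocycle; since $\Homol^2_+(\ess,\ess)$-type and $\HC^1$-type pieces interact, the Massey bracket, as a $3$-cocycle of weight $2$, turns out to be cohomologous to zero, and the $2$-cochain trivializing it has, on the pair $(e\otimes a)\wedge(e\otimes b)$, a component $g \otimes \lambda(ab)$ with $\lambda$ vanishing on $A^{[2]}$ (the latter because the cochain must be alternating in $a,b$, forcing factorization through $A/A^{[2]}$, just as in Lemma~\ref{lemma-h2-triv-0} and Corollary~\ref{cor-h2-triv-+}). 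Requiring that the full, corrected bracket satisfy the Jacobi identity then yields precisely the polynomial relations (\ref{eq-lambda1}) (Jacobi on triples $e\otimes a, e\otimes b, e\otimes c$, mixing the $v$-, $w$-, and $\lambda$-contributions) and (\ref{eq-lambda2}) (Jacobi on triples involving $D$, i.e.\ compatibility of $\lambda$ with the derivation, using $D(v)=0$). One also re-derives the defining conditions of $\Xi_{D,U}$ and $D(v)=0$, $v\in A^D$ from the remaining Jacobi identities; these are already guaranteed by Proposition~\ref{prop-h2-d} but must be seen to persist at second order.

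The third step is to verify there is nothing beyond second order: I would check that with the relations (\ref{eq-lambda1})--(\ref{eq-lambda2}) imposed, the displayed bracket already satisfies the full Jacobi identity (it is quadratic in the deformation parameters $v,w,\xi,\lambda$ by inspection, and the cubic-and-higher obstructions vanish identically), so the infinitesimal deformation prolongs without further corrections and the deformation is exactly as displayed. Finally, one should argue that every filtered deformation is isomorphic to one in this normal form: different representatives of the same cohomology class differ by a coboundary, which is absorbed by a filtration-preserving linear change of variables, and the freedom in choosing the trivializing cochain for the Massey bracket corresponds to modifying $\lambda$ by a term that can again be absorbed by an automorphism; hence the listed brackets exhaust the isomorphism classes. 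I expect the main obstacle to be the Massey-bracket computation in the middle step: extracting the precise shape $g\otimes\lambda(ab)$ of the trivializing cochain and then disentangling which Jacobi identities collapse to (\ref{eq-lambda1}), which to (\ref{eq-lambda2}), and which are automatic, is a delicate bookkeeping exercise in the spirit of Lemma~\ref{lemma-minus-2-ext}, and it is where an error would most easily creep in.
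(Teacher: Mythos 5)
Your proposal follows essentially the same route as the paper: take the general infinitesimal deformation $\Phi_v+\Psi_w+\Upsilon_\xi$ built from the three families of basic cocycles in Proposition~\ref{prop-h2-d}, trivialize the resulting weight-$4$ Massey obstruction by a $2$-cochain of the form $(e\otimes a)\wedge(e\otimes b)\mapsto g\otimes\lambda(ab)$, read off the constraints (\ref{eq-lambda1})--(\ref{eq-lambda2}), and observe that all higher obstructions vanish for weight reasons. One minor imprecision: the nonvanishing obstruction is not the Massey square of the type~(\ref{item-iv}) cocycle alone but the cross-brackets $[[\Phi_v,\Upsilon_\xi]]$, $[[\Psi_w,\Upsilon_\xi]]$, $[[\Upsilon_\xi,\Psi_w]]$, and the factorization $\alpha(a,b)=\lambda(ab)$ of the correcting cochain is forced by the obstruction equation on triples $e\otimes a,\,e\otimes b,\,h\otimes c$ rather than by alternation, which only yields $\lambda(A^{[2]})=0$.
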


\begin{proof}
By Proposition \ref{prop-h2-d}, any such infinitesimal filtered deformation is a
linear combination of the cocycles of type (\ref{item-i}) for some $v \in A^D$, 
type (\ref{item-iii}) for some $w\in A$, and type (\ref{item-iv}) for some 
$\xi \in \Xi_{D,U}$, denoted by $\Phi_v$, $\Psi_w$, and $\Upsilon_\xi$, 
respectively. As all these cocycles are of weight $2$, all the possible Massey 
brackets between them are of weight $4$. Thus nonzero values of these brackets are possible only
when either all the $3$ arguments are of weight $-1$, or two arguments are of
weight $-1$, and one of weight $0$, i.e., for triples of the form
\begin{align*}
&e \otimes a, \> e \otimes b, \> e \otimes c  \\
&e \otimes a, \> e \otimes b, \> h \otimes c  \\
&e \otimes a, \> e \otimes b, \> D .
\end{align*}

Direct computation shows that the only possibly nonvanishing Massey brackets
on these triples are:
\begin{align*}
[[\Phi_v, \Upsilon_\xi]] (e \otimes a, e \otimes b, e \otimes c) &=
f \otimes \Big(\xi(ab)D(c) + \xi(ca)D(b) + \xi(bc)D(a)\Big)v
\\
[[\Psi_w, \Upsilon_\xi]] (e \otimes a, e \otimes b, e \otimes c) &=
f \otimes \Big(\xi(ab)c + \xi(ca)b + \xi(bc)a\Big)w
\\
[[\Upsilon_\xi,\Psi_w]] (e \otimes a, e \otimes b, D) &=
g \otimes \Big(\xi(ab)D(w) + \xi(w)D(ab)\Big) .
\end{align*}

Each $2$-cochain $\Omega$ on the algebra in question of weight $4$ has the form
$$
\Omega(e \otimes a, e \otimes b) = g \otimes \alpha(a,b)
$$
for some alternating bilinear map $\alpha: A \times A \to U$, and the 
corresponding coboundary $\dcobound\Omega$ has the form
\begin{align*}
\dcobound\Omega(e \otimes a, e \otimes b, e \otimes c) &= 
f \otimes \Big(a\alpha(b,c) + c\alpha(a,b) + b\alpha(c,a)\Big)
\\
\dcobound\Omega(e \otimes a, e \otimes b, h \otimes c) &= 
g \otimes \Big(\alpha(ac,b) + \alpha(bc,a)\Big)
\\
\dcobound\Omega(e \otimes a, e \otimes b, D) &= 
g \otimes \Big(\alpha(D(a),b) + \alpha(a,D(b)) + D(\alpha(a,b))\Big) .
\end{align*}

Hence, the necessary and sufficient condition for the second order 
prolongability of the infinitesimal deformation $\Phi_v + \Psi_w + \Upsilon_\xi$, is the 
existence of $\alpha$ such that
\begin{alignat}{2}
&\Big(\xi(ab)D(c) + \xi(ca)D(b) &+\>& \xi(bc)D(a)\Big)v +
\Big(\xi(ab)c + \xi(ca)b + \xi(bc)a\Big)w
\notag \\
&                            &\>=\>& a\alpha(b,c) + c\alpha(a,b) + b\alpha(c,a)
\notag \\
&\alpha(ac,b) + \alpha(bc,a) &\>=\>& 0  \label{eq-alpha1}
\\
&\xi(ab)D(w) + \xi(w)D(ab) &\>=\>& \alpha(D(a),b) + \alpha(a,D(b)) + D(\alpha(a,b))
\notag
\end{alignat}
for any $a,b,c \in A$. Among these $3$ conditions, (\ref{eq-alpha1}) is 
equivalent to $\alpha(a,b) = \lambda(ab)$ for some linear map 
$\lambda: A \to U$, and the other two conditions are equivalent then to 
(\ref{eq-lambda1}) and (\ref{eq-lambda2}), respectively.

The Massey brackets between $2$-cochains $\Phi_v$, $\Psi_w$, $\Upsilon_\xi$ of 
weight $2$, and $2$-cochain $\alpha$ of weight $4$, as well as between
$\alpha$ and $\alpha$, have weight $6$ and $8$, respectively, and hence vanish. 
Consequently, the second order deformation, if exists, is prolonged trivially to
a global one.
\end{proof}

The case $U=0$ is easier:

\begin{proposition}\label{prop-deform-u0}
Any filtered deformation of the algebra $\ess \otimes A + KD$ is isomorphic to 
the Lie algebra with the following bracket (assuming $a,b\in A$):
\begin{alignat*}{2}
&\{e \otimes a, e \otimes b \} \>\>&=&\>\> \xi(ab)D
\\
&\{e \otimes a, h \otimes b \} \>\>&=&\>\>
e \otimes ab + f \otimes \Big(abv + \xi(b)D(a)\Big)
\\
&\{e \otimes a, f \otimes b \} \>\>&=&\>\> h \otimes ab
\\
&\{h \otimes a, h \otimes b \} \>\>&=&\>\> 0
\\
&\{h \otimes a, f \otimes b \} \>\>&=&\>\> f \otimes ab                 
\\
&\{f \otimes a, f \otimes b \} \>\>&=&\>\> 0
\\
&\{e \otimes a, D \} \>\>&=&\>\> e \otimes D(a) + f \otimes aw          
\\
&\{h \otimes a, D \} \>\>&=&\>\> h \otimes D(a)                         
\\
&\{f \otimes a, D \} \>\>&=&\>\> f \otimes D(a) 
\end{alignat*}
for some $v,w \in A$ such that $D(v) = 0$, and $\xi \in \Xi_D$ such that
\begin{equation}\label{eq-xi}
\Big(\xi(ab)c + \xi(ca)b + \xi(bc)a\Big)w = 0
\end{equation}
for any $a,b,c\in A$.
\end{proposition}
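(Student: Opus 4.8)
The plan is to follow the same route as in the proof of Proposition \ref{prop-deform}, but with the simplifications afforded by $U=0$. First I would invoke Proposition \ref{prop-u0}: any infinitesimal filtered deformation of $\ess \otimes A + KD$ is a linear combination of the basic cocycles of type (\ref{item-i}) (parametrized by some $v \in A^D$), type (\ref{item-iii}) (parametrized by some $w \in A$), and type (iii) of that proposition (parametrized by some $\xi \in \Xi_{D,0}$); call these $\Phi_v$, $\Psi_w$, and $\Upsilon_\xi$. All three live in weight $2$, so every Massey bracket formed from them has weight $4$, hence can be nonzero only on triples whose total weight is $4 + (\text{grading shift})$; since the underlying algebra now has components only in weights $-1,0,1$, the relevant triples are $e \otimes a, e \otimes b, e \otimes c$ (weight $-3$, landing in $f \otimes A$ of weight $1$) and $e \otimes a, e \otimes b, D$ (weight $-2$, landing in $h \otimes A$ or $KD$).

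Next I would compute these Massey brackets by direct substitution into the formula $[[\varphi,\psi]](x,y,z) = \varphi(\psi(x,y),z) + \varphi(\psi(z,x),y) + \varphi(\psi(y,z),x)$. The cross-terms $[[\Phi_v,\Upsilon_\xi]]$ and $[[\Psi_w,\Upsilon_\xi]]$ on the triple of three $e$'s produce, just as in Proposition \ref{prop-deform}, the term $f \otimes \big(\xi(ab)D(c)+\xi(ca)D(b)+\xi(bc)D(a)\big)v$ and $f \otimes \big(\xi(ab)c+\xi(ca)b+\xi(bc)a\big)w$; but in the present case $\xi \in \Xi_{D,0}$ satisfies $\xi(a)D(b)+\xi(b)D(a)=0$, whence by the cocycle identity (\ref{eq-coc-a}) one gets $\xi(ab)D(c)+\xi(ca)D(b)+\xi(bc)D(a)=0$, so the first of these vanishes identically. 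The brackets involving $D$ collapse because in the $U=0$ deformation the only term of $\Upsilon_\xi$ landing outside $\ess \otimes A$ is the $\xi(ab)D$ part, and $\{h \otimes a, h \otimes b\}=0$, $\{f \otimes a, f \otimes b\}=0$ etc. leave nothing to feed back; one checks the remaining bracket vanishes as well. I would then compare with the $2$-coboundaries of weight $4$: a weight-$4$ $2$-cochain $\Omega$ has the form $\Omega(e\otimes a, e\otimes b)=g\otimes\alpha(a,b)$ with $\alpha$ alternating $A\times A\to U=0$, so $\Omega=0$ and there are no coboundaries of weight $4$. Hence the infinitesimal deformation prolongs to second order if and only if $\big(\xi(ab)c+\xi(ca)b+\xi(bc)a\big)w=0$ for all $a,b,c\in A$, which is condition (\ref{eq-xi}).

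Finally I would observe that, as in Proposition \ref{prop-deform}, any higher Massey brackets among these cochains land in weights $\ge 6$ and thus vanish (all cochains here are weight $2$, and there is no weight-$4$ obstruction cochain to carry along since it was forced to be $0$), so the second-order deformation, when it exists, is already a global one. Substituting $v,w$ and $\xi$ into the standard-grading filtered multiplication and using the brackets of the undeformed algebra (\ref{eq-mult}) extended by $g$ (i.e.\ $[e,g]=f$, $[h,g]=[f,g]=0$) together with the $D$-action, I would read off the displayed multiplication table, noting that the $g \otimes U$-row is absent because $U=0$ and that the terms $g \otimes(\xi(a)D(b)+\xi(b)D(a))$ appearing in Proposition \ref{prop-deform} all vanish since $\xi \in \Xi_{D,0}$. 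The main obstacle is the bookkeeping: verifying carefully that no other Massey bracket among $\Phi_v,\Psi_w,\Upsilon_\xi$ on the two admissible triples is nonzero, and that the degenerate identities for $\xi \in \Xi_{D,0}$ kill exactly the right terms so that the remaining obstruction is precisely (\ref{eq-xi}) and nothing more; everything else is routine substitution.
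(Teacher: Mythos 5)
Your proposal is correct and follows essentially the same route as the paper, which simply refers back to the proof of Proposition \ref{prop-deform} and notes that the absence of weight-$4$ $2$-cochains (since $U=0$) forces the obstruction to reduce to (\ref{eq-xi}) once the $v$-term drops out. One tiny bookkeeping slip: a weight-$4$ Massey bracket evaluated on the triple $e\otimes a,\, e\otimes b,\, D$ (total weight $-2$) lands in the weight-$2$ component, which is zero because $U=0$ --- not in $h\otimes A+KD$ as you state --- but this only makes its vanishing more immediate and does not affect your conclusion.
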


\begin{proof}
Similar to Proposition \ref{prop-deform}. Since, in this case, there are no 
$2$-cochains of weight $4$, the necessary and sufficient condition for 
prolongability of infinitesimal deformation is
\begin{equation*}
\Big(\xi(ab)D(c) + \xi(ca)D(b) + \xi(bc)D(a)\Big)v +
\Big(\xi(ab)c + \xi(ca)b + \xi(bc)a\Big)w = 0
\end{equation*}
for any $a,b,c\in A$. The first summand here vanishes, and we are left with 
(\ref{eq-xi}).
\end{proof}

To determine the isomorphism classes of algebras appearing in Propositions 
\ref{prop-deform} and \ref{prop-deform-u0}, yet alone to determine which of 
the corresponding $2$-cocycles are semitrivial, in the parlance of \cite{blls}
(see Remark \ref{rem-lemma-ess}, part 4), seems to be a technically difficult
task. A very partial solution of this task -- with inconclusive answer, but 
enough to achieve our main goal -- is presented in the next section.

\section{
Simple Lie algebras of absolute toral rank $2$ having a Cartan subalgebra of
toral rank $1$}\label{sec-fin}

We arrive, finally, at our main goal:

\begin{theorem}\label{th-main}
Any finite-dimensional simple Lie algebra over an algebraically closed field, of
absolute toral rank $2$, and having a Cartan subalgebra of toral rank $1$, is 
isomorphic to $\ess$.
\end{theorem}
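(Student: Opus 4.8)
The plan is to combine Skryabin's structural classification (Theorem~\ref{th-skr}) with the deformation-theoretic computations of \S\ref{sec-plus-d}. By Theorem~\ref{th-skr}, a simple Lie algebra $L$ of the kind in the statement is either a Zassenhaus algebra $W_1^\prime(n)$, or a Hamiltonian algebra $H_2^{\prime\prime}(n,m)$, or a filtered deformation of a graded semisimple algebra squeezed as in \eqref{eq-form} with $n=2$, $S\simeq W_1^\prime(2)=\ess$ (the $n=1$, Hamiltonian alternative is excluded). First I would dispose of the Zassenhaus and Hamiltonian alternatives by toral-rank bookkeeping: $\TR(W_1^\prime(n))=n$, so absolute toral rank $2$ forces $n=2$ and $W_1^\prime(2)=\ess$ itself; and $\TR(H_2^{\prime\prime}(n,m))=n+m-1$ with $n,m>1$ gives absolute toral rank at least $3$, so this case does not occur. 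Hence $L$ must be a filtered deformation of a semisimple algebra $\mathcal L$ with $\ess\otimes\mathcal O_1(2)\subseteq\mathcal L\subseteq\Der(\ess)\otimes\mathcal O_1(2)+K\partial$, with the standard grading of depth $1$.

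The middle term identifies $\mathcal L$ as one of the algebras treated in \S\ref{sec-plus-d}: $A=\mathcal O_1(2)$ (so $\dim A=4>2$), $D=\partial$, and $\mathfrak D\otimes U$ with $\mathfrak D=K(\ad f)^2=Kg$ and $U$ a $\partial$-invariant subspace of $\mathcal O_1(2)$; that is, $\mathcal L=\ess\otimes A+g\otimes U+K\partial$ with $U\in\{0, K x^{(1)}+Kx^{(3)}, \mathcal O_1(2)_{>0}, \dots\}$ chosen among the $\partial$-invariant subspaces, or the slightly smaller variant with no $\partial$ (the depth-$1$ grading requires the $\partial$-term present precisely to kill depth $2$, so actually $K\partial$ is there). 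So the filtered deformations of $\mathcal L$ are exactly the algebras listed in Propositions~\ref{prop-deform} and \ref{prop-deform-u0}, parametrized by $v,w\in A$ with $D(v)=0$, $\xi\in\Xi_{D,U}$, and $\lambda:A\to U$ with $\lambda(A^{[2]})=0$, subject to \eqref{eq-lambda1}--\eqref{eq-lambda2}. The bulk of the argument is then: go through this explicit finite list of deformed brackets, and show that every resulting Lie algebra is either non-simple, or already isomorphic to $\ess$ (for dimension reasons the only way a deformation of a $\ge 15$-dimensional algebra can be isomorphic to $\ess$ is if its simple quotient collapses, so "simple $\Rightarrow$ isomorphic to $\ess$" really means we must show none of the genuinely large deformations is simple, unless the filtered object degenerates).

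Concretely, I would first pin down $\Xi_{D,U}$ for $A=\mathcal O_1(2)$: a linear functional $\xi$ vanishing on $A^{[2]}$, on $D(A)=\partial(\mathcal O_1(2))$, on $U$, and satisfying $\xi(a)D(b)+\xi(b)D(a)\in U$ together with the long cubic relation \eqref{eq-long}. In dimension $4$ this is a short linear-algebra computation and $\Xi_{D,U}$ turns out to be very small (likely $0$ or $1$-dimensional depending on $U$). Similarly $A^D=\ker\partial$ is small, and $A/(D(A)+U)$ is small. With the parameters thus confined to an explicit low-dimensional space, I would examine the deformed multiplication tables from Proposition~\ref{prop-deform}: look for a proper nonzero ideal. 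The natural candidate ideals come from the grading --- e.g. the span of the top components, or $f\otimes A+g\otimes U$ twisted by the deformation, or the image of the deformed bracket --- and one checks, case by case in the (short) parameter list, that such an ideal always survives the deformation or that the algebra fails to be generated by its simple part. The point I expect to be the main obstacle is precisely this: showing that \emph{no} choice of $(v,w,\xi,\lambda)$ produces a simple algebra. This requires either a clean invariant argument (e.g. every such deformation retains a characteristic non-simple filtration, or has a nonzero solvable radical, or its semisimplification is still of the form $\ess\otimes A+\dots$ with $A\ne K$, hence non-simple), or a somewhat tedious but finite enumeration. I would look hard for the invariant argument first --- perhaps the key is that filtered deformations here cannot change the associated graded enough to merge the big semisimple algebra into a simple one of the same dimension, so simplicity would force $\dim\mathcal L=\dim\ess$, i.e. $A=K$, contradicting $\dim A>2$; the edge cases $\dim A\le 2$ (where our propositions do not apply and one argues directly) then have to be handled separately, and that is where the $15$-dimensional family of \S\ref{sec-15} presumably enters as the degenerate/limiting situation to be ruled out by hand.
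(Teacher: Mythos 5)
Your reduction via Skryabin's theorem and toral-rank bookkeeping, and the identification of the relevant graded algebras as $\ess \otimes \mathcal O_1(2) + g\otimes U + K\partial$ with $U$ a $\partial$-invariant subspace (so that Propositions \ref{prop-deform} and \ref{prop-deform-u0} apply), matches the paper. But your endgame is built on a premise that is false, and the paper's own \S\ref{sec-15} says so explicitly: you propose to finish by showing that \emph{no} choice of parameters $(v,w,\xi,\lambda)$ yields a simple algebra, hoping for an invariant argument of the form ``simplicity would force $\dim L=\dim\ess$.'' In fact the surviving case $U=\langle 1,x\rangle$ produces a family of $15$-dimensional Lie algebras with multiplication table (\ref{eq-15}) which \emph{are} simple (this is proved in \S\ref{sec-15}; the $\beta=\delta=0$ member is Skryabin's own example of a simple algebra satisfying the hypotheses of Theorem \ref{th-skr}). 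So no non-simplicity argument, clean or enumerative, can close the proof. The hypothesis you must re-use at the end is the one you only used at the beginning: absolute toral rank $2$. The paper eliminates the $15$-dimensional deformations by exhibiting three explicit elements of the $p$-envelope spanning a torus, so that $\TR(\mathscr L)\ge 3$, a contradiction; non-simplicity is used only for the easier steps ($\Xi_{\partial,U}\ne 0$, which kills $U=0$ and $U=\mathcal O_1(2)$, while $U=\langle 1,x,x^{(2)}\rangle$ is killed by a commutant-dimension count).

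Two smaller points. First, there are no ``edge cases $\dim A\le 2$'': here $A=\mathcal O_1(2)$ is always $4$-dimensional, and the $15$-dimensional family is not a degenerate boundary case of the propositions but the main surviving branch. Second, your list of $\partial$-invariant subspaces is off: $Kx+Kx^{(3)}$ is not $\partial$-invariant (since $\partial(x^{(3)})=x^{(2)}$); the proper invariant subspaces containing $x$ are exactly $\langle 1,x\rangle$ and $\langle 1,x,x^{(2)}\rangle$, and the argument that $x\in U$ (forced by $\xi(a)\partial(b)+\xi(b)\partial(a)\in U$ with $\xi(x^{(3)})=1$) is what narrows the case analysis to these two.
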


\begin{proof}
Let $\mathscr L$ be a Lie algebra satisfying the conditions of the theorem. 
Apply Theorem \ref{th-skr}. If $\mathscr L$ is isomorphic to Zassenhaus or 
Hamiltonian algebra, the absolute toral rank values of these algebras exclude 
all the cases except $\mathscr L \simeq \ess$. 

Suppose $\mathscr L$ is a filtered deformation of a Lie algebra $L$ satisfying 
(\ref{eq-form}). Then we have:
$$
\TR(S) \le \TR(S \otimes \mathcal O_1(n)) \le \TR(L) \le \TR(\mathscr L) = 2 .
$$
The first two inequalities here follow from the obvious fact that the absolute 
toral rank of an algebra is not less than the absolute toral rank of its 
subalgebra, and the third one follows from \cite[Theorem 5.1]{skryabin} (see 
also \cite[Vol. I, Theorem 1.4.6]{strade-intro}). Therefore, $\TR(S) = 1$ or 
$2$. The first possibility is ruled out by \cite[Theorem 6.5]{skryabin} (an 
alternative, and shorter, proof of nonexistence of simple Lie algebras of absolute toral rank $1$ is given in 
\cite[Theorem 2]{G}), and the second one implies, again, $S \simeq \ess$. 
Consequently, $L$ is a graded semisimple Lie algebra lying between 
$\ess \otimes \mathcal O_1(2)$ and 
$\Der(\ess) \otimes \mathcal O_1(2) + K\partial$.

According to Lemma \ref{lemma-ess}(\ref{en-ess-1}), the algebra of outer 
derivations of $\ess$ is $2$-dimensional abelian, spanned by 
$(\ad e)^2$ and $(\ad f)^2$. Accordingly, the spaces 
$(\ad e)^2 \otimes A$ and $(\ad f)^2 \otimes A$ of derivations of 
$\ess \otimes A$, consist of derivations of weight $-2$ and $2$, respectively. 
On the other hand, $\partial$, considered as a derivation
of $\ess \otimes A$, has weight $0$. As the grading of $L$ has depth $1$,
this rules out derivations belonging to $(\ad e)^2 \otimes A$, and since the 
induced grading on $\ess \otimes \mathcal O_1(2)$ is standard, the algebra $L$ 
necessary has the form 
$$
L = \ess \otimes \mathcal O_1(2) + (\ad f)^2 \otimes U + K\partial  ,
$$
where $U$ is a $\partial$-invariant subspace of $\mathcal O_1(2)$. In other 
words (see discussion at the beginning of \S \ref{sec-plus-d}),
\begin{equation}\label{eq-incl}
\ess \otimes \mathcal O_1(2) + K\partial \subseteq L \subseteq 
W_1(2) \otimes \mathcal O_1(2) + K\partial .
\end{equation}
In particular, $13 \le \dim L \le 17$.

We are in situation of Propositions \ref{prop-deform} and \ref{prop-deform-u0}
with $A = \mathcal O_1(2)$ and $D = \partial$, and the rest of the proof 
consists of a somewhat boring elucidation of the structure of algebras appearing
there. 

Note that in both cases $U \ne 0$ and $U = 0$, the space $\Xi_{\partial,U}$ must
be nonzero, otherwise by Propositions \ref{prop-deform} and 
\ref{prop-deform-u0}, $\mathscr L$ contains an ideal isomorphic to a filtered 
deformation of $\ess \otimes \mathcal O_1(2) + g \otimes U$ (of $\ess \otimes \mathcal O_1(2)$ 
in the case $U=0$), and hence is not simple. Since, by definition, any element 
$\xi \in \Xi_{\partial,U}$ vanishes on 
$\partial(\mathcal O_1(2)) = \langle 1,x,x^{(2)} \rangle$, we should have, 
up to a scalar, $\xi(x^{(3)}) = 1$. We have then
$x = \xi(x^{(2)})\partial(x^{(3)}) + \xi(x^{(3)})\partial(x^{(2)}) \in U$,
which excludes the case $U=0$. On the other hand, in the case 
$U = \mathcal O_1(2)$, the space $\Xi_{\partial,O_1(2)}$ is equal to
zero, since its elements vanish on $\mathcal O_1(2)$. 

Successive application of $\partial$ to elements of $\mathcal O_1(2)$ shows that
any proper $\partial$-invariant subspace of $\mathcal O_1(2)$ containing $x$
coincides with either $\langle 1, x \rangle$ or $\langle 1, x, x^{(2)} \rangle$.

It is straightforward to check that the map $\xi: \mathcal O_1(2) \to K$ 
defined, as above, by
$$
1 \mapsto 0, \quad x \mapsto 0, \quad x^{(2)} \mapsto 0, \quad 
x^{(3)} \mapsto 1 , 
$$
satisfies (\ref{eq-long}). As $\mathcal O_1(2)^{[2]} = K$, and 
$U \subseteq \partial(\mathcal O_1(2))$, the rest of the defining conditions
of $\Xi_{\partial,U}$ are satisfied automatically, so the latter space is
$1$-dimensional, linearly spanned by the map $\xi$.  

Further, since $\Ker\partial = K1$, for the element $v \in A$ from Proposition 
\ref{prop-deform} we have $v = \alpha 1$ for some $\alpha \in K$. The element
$w \in A$ from the same Proposition originates from cocycles of type
(\ref{item-iii}) in Proposition \ref{prop-h2-d}. Up to coboundaries, we may
assume that $w$ lies in a subspace of $\mathcal O_1(2)$ complementary to 
$\partial(\mathcal O_1(2)) + U = \langle 1,x,x^{(2)} \rangle$, and hence
$w = \beta x^{(3)}$ for some $\beta \in K$.

Denote, for a moment, the Lie algebra appearing in our case of 
Proposition \ref{prop-deform} by $\mathscr L(\alpha,\beta,\xi,\lambda)$. For any
nonzero $\gamma \in K$, the map
\begin{alignat*}{2}
e \otimes a &\mapsto \frac{1}{\sqrt{\gamma}} &e \otimes a  \\
h \otimes a &\mapsto &h \otimes a                          \\
f \otimes a &\mapsto \sqrt{\gamma} &f \otimes a            \\
g \otimes u &\mapsto {\hskip 10pt} \gamma &g \otimes u     \\
D           &\mapsto D &
\end{alignat*}
provides us with an isomorphism 
$$
\mathscr L\Big(\alpha, \beta, \xi, \lambda\Big) \simeq 
\mathscr L\Big(\frac{\alpha}{\gamma}, \frac{\beta}{\gamma}, \frac{1}{\gamma}\xi, \frac{1}{\gamma^2}\lambda\Big) ,
$$
so we indeed can normalize $\xi$ by setting $\xi(x^{(3)}) = 1$.

The equality (\ref{eq-lambda1}) for the triple $1,x,x^{(3)}$ yields 
$x\lambda(x^{(3)}) + x^{(3)}\lambda(x) = \alpha 1$. The left-hand side here
belongs to the ideal $\langle x,x^{(2)},x^{(3)} \rangle$ of $\mathcal O_1(2)$, 
hence both sides vanish, and $\alpha = 0$. The remaining part of 
(\ref{eq-lambda1}) can be rewritten as
\begin{equation}\label{eq-der}
c\Lambda(ab) + b\Lambda(ca) + a\Lambda(bc) = 0
\end{equation}
for any $a,b,c \in A$, where $\Lambda(a) =  \lambda(a) + \beta\xi(a)x^{(3)}$. The condition 
(\ref{eq-der}) is equivalent to $\Lambda$ being a derivation of 
$\mathcal O_1(2)$. Consequently, 
$$
\lambda(a) + \beta\xi(a)x^{(3)} = f\partial(a) + g\partial^2(a)
$$ 
for any $a \in A$, and some elements $f,g\in \mathcal O_1(2)$. The obvious 
computations then yield that this is possible if and only if
\begin{equation}\label{eq-cond1}
x^{(2)} \lambda(x) + x \lambda(x^{(2)}) + \lambda(x^{(3)}) = \beta x^{(3)} .
\end{equation}

Writing the equality (\ref{eq-lambda2}) for $a=x, x^{(2)}, x^{(3)}$, we get
respectively:
\begin{alignat*}{2}
&\partial(\lambda(x)) \>&=&\> \beta 1                   \\
&\partial(\lambda(x^{(2)})) \>&=&\> \lambda(x) + \beta x \\
&\partial(\lambda(x^{(3)})) \>&=&\> \lambda(x^2) .
\end{alignat*}
Coupled with (\ref{eq-cond1}), this gives
\begin{alignat*}{2}
&\lambda(1) \>&=&\> 0 \\
&\lambda(x) \>&=&\>       \gamma 1 + \beta x  \\
&\lambda(x^{(2)}) \>&=&\> \delta 1 + \gamma x  \\
&\lambda(x^{(3)}) \>&=&\> \delta x + \gamma x^{(2)}
\end{alignat*}
for some $\gamma, \delta \in K$.

Now let us look at the pairs in the multiplication table of the deformed algebra
$\mathscr L$ in Proposition \ref{prop-deform}, whose product contains elements 
from $g \otimes U$: 
$\{e \otimes a, e \otimes b\}$, 
$\{e \otimes a, f \otimes b\}$,
$\{h \otimes a, h \otimes b\}$, and 
$\{g \otimes u, \partial\}$. Considered on the basic elements of 
$\mathcal O_1(2)$, the term $\xi(a)\partial(b) + \xi(b)\partial(a)$, occurring 
in $\{e \otimes a, f \otimes b\}$ and $\{h \otimes a, h \otimes b\}$, is nonzero
only when one of $a,b$ is equal to $x^{(3)}$, and the other one belongs to 
$\langle x,x^{(2)} \rangle$, and hence the values of that term lie in 
$\langle 1,x \rangle$. The elements of $\partial(U)$, occurring in 
$\{g \otimes u, \partial\}$, also lie in $\langle 1,x \rangle$. Thus
the only occurrences of elements of the form $g \otimes u$, where 
$u \notin \langle 1,x \rangle$, is possible in the product 
$\{e \otimes a, e \otimes b\}$, and all the relevant values are:
\begin{equation*}
\{e \otimes 1, e \otimes x^{(3)} \} = \{e \otimes x, e \otimes x^{(2)} \} =
g \otimes (\delta x + \gamma x^{(2)}) + \partial .
\end{equation*}

Consequently, the commutant $[\mathscr L, \mathscr L]$ lies in
$$
\ess \otimes \mathcal O_1(2) + g \otimes \langle 1,x \rangle + 
\langle g \otimes (\delta x + \gamma x^{(2)}) + \partial \rangle ,
$$
and hence is of dimension $15$ at most (in fact, as it is easy to see from  
considerations below, it is of dimension $15$). This excludes the case 
$U = \langle 1,x,x^{(2)} \rangle$, where $\dim \mathscr L = 16$.

In the remaining case we have $U = \langle 1,x \rangle$, $\dim \mathscr L = 15$,
and $\gamma = 0$. Let us now write the multiplication table of the deformed 
algebra explicitly. We list only those multiplications between the basic 
elements whose product in the deformed algebra $\mathscr L$ differs from the 
product in the graded algebra $L$.
\begin{alignat}{5}
&&&\{e \otimes 1,\> &e &\otimes x &\} \>&=&\> g &\otimes \beta x
\notag \\
&&&\{e \otimes 1,\> &e &\otimes x^{(2)} &\} \>&=&\> g &\otimes \delta 1
\notag \\
&\{e \otimes 1,\> &e \otimes x^{(3)} \} \>=\>\> 
&\{e \otimes x,\> &e &\otimes x^{(2)} &\} \>&=&\> g &\otimes \delta x + \partial
\notag \\
&&&\{e \otimes x,\> &e &\otimes x^{(3)} &\} \>&=&\> h &\otimes 1
\notag \\
&&&\{e \otimes x^{(2)},\> &e &\otimes x^{(3)} &\} \>&=&\> h &\otimes x
\notag \\
&&&\{e \otimes x,\> &h &\otimes x^{(3)} &\} \>&=&\> f &\otimes 1
\notag \\
&&&\{e \otimes x^{(2)},\> &h &\otimes x^{(3)} &\} \>&=&\> f &\otimes x
\label{eq-15} \\
&&&\{e \otimes x^{(3)},\> &h &\otimes x^{(3)} &\} \>&=&\> f &\otimes x^{(2)}
\notag \\
&\{e \otimes x,\> &f \otimes x^{(3)} \} \>=\>\>
&\{e \otimes x^{(3)},\> &f &\otimes x &\} \>&=&\> g &\otimes 1
\notag \\
&\{e \otimes x^{(2)},\> &f \otimes x^{(3)} \} \>=\>\>
&\{e \otimes x^{(3)},\> &f &\otimes x^{(2)} &\} \>&=&\> g &\otimes x
\notag \\
&&&\{h \otimes x,\> &h &\otimes x^{(3)} &\} \>&=&\> g &\otimes 1
\notag \\
&&&\{h \otimes x^{(2)},\> &h &\otimes x^{(3)} &\} \>&=&\> g &\otimes x
\notag \\
&&&\{e \otimes 1,\> &\partial& &\} \>&=&\> f &\otimes \beta x^{(3)}
\notag
\end{alignat}

A tedious, but straightforward computation shows that the following $3$ 
elements:
\begin{align*}
&h \otimes 1 + e \otimes x + (e \otimes x)^{[2]} ,       \\
&h \otimes (1 + x^{(2)}) + e \otimes x^{(3)} ,   \\
&h \otimes 1 + (e \otimes x^{(2)})^{[2]} + \delta (h \otimes x^{(3)})^{[2]}
\end{align*}
(note that $h \otimes x^{(2)} = (e \otimes x^{(3)})^{[2]}$, so the first and
the second lines here are transposed with respect to the substitution 
$x \leftrightarrow x^{(3)}$) linearly span a torus in the $p$-envelope of $\mathscr L$, so the absolute toral rank of $\mathscr L$ is at least $3$, 
a contradiction.
\end{proof}

\section{On $15$-dimensional simple Lie algebras}\label{sec-15}

The algebras $\mathscr L$ defined by multiplication table (\ref{eq-15}) are 
simple. Indeed, assume notation from the proof of Theorem \ref{th-main}, and let
$I$ be a nonzero ideal of $\mathscr L$. Any filtration on $\mathscr L$ induces 
a filtration on $I$, whose associated graded algebra is an ideal in the 
semisimple algebra $L$. Hence, $I$ contains $\ess \otimes \mathcal O_1(2)$, the
socle of $L$,  as a vector space. The multiplication table (\ref{eq-15}) reveals that then $I$ contains $g \otimes \langle 1,x \rangle$ and the element
$g \otimes \delta x + \partial$, and hence coincides with $\mathscr L$.

In \cite[Example at pp.~691--692]{skryabin}, a series of simple Lie algebras
satisfying the conditions of Theorem \ref{th-skr}, is constructed. These 
algebras are filtered deformations of semisimple Lie algebras of the form 
$$
W_1^\prime(n) \otimes \mathcal O_1(2) + e_{2^n-2} \otimes \langle 1,x \rangle 
+ K\partial .
$$ 
The smallest algebra in the series, obtained when $n=2$, coincides with algebra
(\ref{eq-15}) for $\beta = \delta = 0$.

At present, we do not know whether algebras (\ref{eq-15}) are isomorphic or not
for different values of parameters $\beta$ and $\delta$. Computer calculations
suggest that all these algebras share many numerical invariants: they are of
absolute toral rank $3$, the second cohomology $\Homol^2(\mathscr L,K)$ 
vanishes, the cohomology $\Homol^1(\mathscr L,\mathscr L)$, 
$\Homol^2(\mathscr L,\mathscr L)$, and $\Homol^3(\mathscr L,K)$ is of dimension
$4$, $13$, and $15$ respectively, they do not possess nontrivial symmetric invariant bilinear forms, the $p$-envelope coincides with the whole 
derivation algebra, and hence is of dimension $19$, and the subalgebra generated by absolute zero divisors (i.e., elements $x\in \mathscr L$
such that $(\ad x)^2 = 0$) has dimension $7$.

In \cite{eick}, a computer-generated list of simple Lie algebras over 
$\mathsf{GF}(2)$ of small dimensions is presented. It seems that the algebras 
considered here are not isomorphic to any $15$-dimensional algebra from this 
list, though, again, at present a rigorous proof of this is lacking. A thorough
study of the known $15$-dimensional simple Lie algebras is deferred to another 
paper.

\section*{Acknowledgements}

Thanks are due to Askar Dzhumadil'daev, Dimitry Leites, Serge Skryabin, and
the anonymous referee for useful remarks and clarifications. GAP \cite{gap} was
utilized to check some of the computations performed in this paper. Grishkov was
supported by Russian Science Foundation (project 16-11-10002) (\S\S 1--4), and 
by CNPq (grant 308221/2012-5) and FAPESP (grant 14/09310-5) (\S\S 5--6). 
Zusmanovich was supported by FAPESP (grant 13/12050-2), by the Statutory City of
Ostrava (grant 0924/2016/Sa\v{S}), and by the Ministry of Education and Science of the Republic of Kazakhstan 
(grant 0828/GF4). Most of this work was carried out during Zusmanovich's visit
to the University of S\~ao Paulo.

\end{document}